 \newcommand{\alp}{\alpha}
\theoremstyle{definition}
\newtheorem{definition}{Definition}[section]
\newtheorem{example}{Example}
\newtheorem{lem}{Lemma}[section]
\newtheorem{thm}{Theorem}[section]
 \newtheorem{rem}{Remark}[section]
\newtheorem{alg}[]{Algorithm}
\begin{document}

\begin{frontmatter}


\title{Efficient and fast predictor-corrector method for solving nonlinear fractional differential equations with non-singular kernel}
\author[label1]{Seyeon Lee}
\author[label1]{Junseo Lee}

\author[label2]{Hyunju Kim}
\address[label2]{Department of Mathematics, North Greenville University, Tigerville, SC USA 29688}

\author[label1]{Bongsoo Jang\corref{cor1}}
\address[label1]{Department of Mathematical Sciences, Ulsan National Institute of Science and
Technology(UNIST), Ulsan 44919, Republic of
Korea}

\cortext[cor1]{Corresponding author. Tel:+82 52 217 3136}

\ead{bsjang@unist.ac.kr}


\date{\today}

\begin{abstract}
Efficient and fast predictor-corrector methods are proposed to deal with nonlinear Caputo-Fabrizio fractional differential equations, where Caputo-Fabrizio operator is a new proposed fractional derivative with a smooth kernel.
The proposed methods achieve a uniform accuracy order with the second-order scheme for linear interpolation and the third-order scheme for quadratic interpolation. The convergence analysis is proved by using the discrete Gronwall's inequality.
Furthermore, applying the recurrence relation of the memory term, it reduces CPU time executed the proposed methods. The proposed fast algorithm requires approximately $\mathcal{O}{(N)}$ arithmetic operations while $\mathcal{O}{(N^2)}$ is required in case of the regular predictor-corrector schemes, where $N$ is the total number of time step.


The following numerical examples demonstrate the accuracy of the proposed methods as well as the efficiency; nonlinear fractional differential equations, time-fraction sub-diffusion, and time-fractional advection-diffusion equation. The theoretical convergence rates are also verified by numerical experiments.

\end{abstract}

\begin{keyword}
 Caputo-Fabrizio fractional derivative, Caputo fractional derivative, predictor-corrector algorithm, Gronwall's inequality
\end{keyword}

\end{frontmatter}


\section{Introduction}\label{sec_intro}

Nowadays, fractional differential equations have received the attention of many researchers in the diverse area of applied sciences, technology and engineering because it can reflect memory effect. For example, fractional differential equations can be applied to anomalous diffusion transport \cite{GHPS, KRS, Z},  porous media \cite{BWM1}, viscoelastic materials \cite{BC, Ma}. We also find applications in chaos, chemistry, finance, geo-hydrology, nonlinear dynamics \cite{Ma, petravs2011fractional, baleanu2011fractional, P, atangana2017fractional, SAM}.

 There are popular definitions of fractional derivative and integration which are introduced in \cite{petravs2011fractional, atangana2017fractional, AG, CF1} such as Gr\"{u}nwald-Letnikov, Riemann-Liouville, Caputo, etc.  Recently, several new fractional derivatives with non-singular kernels  have been suggested such as Atangana - Gmez, Atangana-Baleanu and Caputo-Fabrizio\cite{AG, CF1, atangana2017}. 
In this work, we consider the following nonlinear differential equation with fractional order $\alp\in (0,1)$ in the Caputo-Fabrizio sense ($D^{\alp}_a \equiv ~^{CF}_a D^{\alp}_t$) for $y(t) \in H^1[0,T]$,
\begin{align}\label{intro_mod}
\begin{cases}
&D^{\alp}_ay(t)=f(t,y(t)),\quad t\in[0,T],\\
&y(0)=y_0.
\end{cases}
\end{align}

The Caputo-Fabrizio fractional derivative is defined as follows:
\begin{definition}\cite{CF1} 
	Let $y \in H^1(a,b), \alp \in [0,1]$, then the Caputo-Fabrizio fractional derivative of order $\alp$ is given by
	\begin{equation}\label{def_cf}
		~^{CF}_a D^{\alp}_t y(t)=\frac{M{(\alp)}}{1-\alp} \int_a^t y^{'}(s) \exp\left[ -\frac{\alp}{1-\alp}(t-s)\right]ds,
	\end{equation}
	where $M(\alp)$ is a normalization function such that $M(0)=M(1)=0$. However, if the function does not belongs to $H^1(a,b)$, then the derivative can be defined as
	\begin{equation*}
		~^{CF}_a D^{\alp}_t y(t)=\frac{ M{(\alp)}}{1-\alp} \int_a^t \Bigl{(}y(t)-y(s)\Bigr{)} \exp\left[ -\frac{\alp}{1-\alp}(t-s)\right]ds.
	\end{equation*}
\end{definition}

In papers \cite{DFF02, JANG1}, authors proposed predictor-corrector schema equipped with the approximation of $f(t,y(t))$ by the linear or quadratic interpolation to approximate the solution of the Volterra integral equation which is equivalent to that of the fractional order differential equation \eqref{intro_mod} with a singular fractional derivative such as the Caputo fractional derivative. And many authors developed numerical methods for solving Volterra integral equations to evade dealing with singular kernel in the fractional differential equation \eqref{intro_mod} \cite{Brunner-linear, Brunner-nonlinear, Chen1, Gu, Gu-Chen, Huang1, Shen1, Wei1, Xianjuan1}.
Comparably, for the Caputo-Fabrizio fractional derivative, it can be converted into the integral formulation as 
\begin{equation} \label{int_form}
	y(t)=y_0+\frac{1-\alp}{M(\alp)}f(t,y(t))+\frac{\alp}{M(\alp)} \int_0^t f(s,y(s)) ds,
\end{equation}
by taking the Laplace transform to the model problem (\ref{intro_mod}).
Authors in papers \cite{atangana2017, KOCA2018278} introduced numerical methods approximating $y(t)$ in (\ref{int_form}) by using a linear interpolation and quadratic interpolation of $f(t,y(t))$. The paper \cite{atangana2016numerical} applied the first-order approximation of $f'(s)$ to the Caputo-Fabrizio fractional derivative in $(\ref{def_cf})$.

In this work, we introduce a new transformation of the model problem. Using the integration by parts, the model problem \eqref{intro_mod} is transformed involving the non-singular kernel as follows;
\begin{equation}\label{model}	
	y(t)=g(t,y(t))+\beta \int_0^t y(s) \exp\left[ -\beta(t-s)\right]ds,
\end{equation}
where 
\begin{equation}\label{gx_lin}
	g(t,y(t))=\frac{1-\alp}{M(\alp)} f(t,y(t))+y_0 \exp\left[ -\beta t \right], \quad \beta=\frac{\alp}{1-\alp}.
\end{equation}

The main results are described as
\begin{enumerate}
	\item New predictor-corrector schemes of (\ref{model}) by using the linear and quadratic interpolation of $y(s)$ in the integral term are proposed. Also we obtain the global convergence analysis for the proposed methods that is the secod-order scheme with linear interpolation and the third-order scheme with quadratic interpolation for any fractional order $0<\alpha<1$, respectively. 
	\item The computational cost of a numerical method for solving the model problem is high in general due to the non-local property in (\ref{model}). In order to overcome this drawback, we propose a new class of predictor-corrector scheme by constructing a recurrence relation of the memory term with an exponential kernel. The memory term is updated by adding a histry term into a local term recursively. This fast algorithm reduces the arithmetic operations required from $\mathcal{O}{(N^2)}$ to $\mathcal{O}{(N)}$ approximately, where $N$ is the total number of time step. 
	\item We extend the proposed method to solve the fractional partial differential equations such as the time-fractional sub-diffusion and time-fractional advection problems. All numerical results support the theoretical results in convergence analysis and show the reduction of computational cost. 
\end{enumerate}
For the numerical method, let us discretize the grid to be
\begin{align}\label{intro_grid}
\Phi_N:=\{t_j:0=t_0<\ldots<t_j<\ldots<t_{n}<t_{n+1}<\ldots<t_{N+1}=T\}.
\end{align}
We assume that the grid is uniform for simplicity, i.e., $h=t_{j+1}-t_j$, $\forall j=0,\ldots,N-1$. 
Then, Eq. (\ref{model}) can be rewritten at $t=t_{n+1}$ as follows 
\begin{align}\label{lin_sol}
	y(t_{n+1})=g(t_{n+1},y(t_{n+1}))+\beta \int_{t_0}^{t_{n+1}} y(s) \exp\left[ -\beta(t_{n+1}-s)\right]ds.
\end{align}

Our paper is organized as follows. In Section \ref{sec_lin}, we describe a new predictor-corrector scheme with linear interpolation and its error analysis. The second-order of accuracy is achieved by the linear interpolation for any $0<\alp<1$. In similar to Section \ref{sec_lin}, we describe the improved predictor-corrector scheme with quadratic interpolation and the third-order of accuracy in Section \ref{sec_quad}. In Section \ref{sec_fast}, we provide a fast algorithm for computing the memory part of (\ref{lin_sol}) which reduces the number of arithmetic operations required from $O(N^2)$ to $O(N)$ by using a recurrence relation. In Section \ref{sec_num}, we demonstrate the performance of the proposed methods by solving numerical tests. The error estimates shown in Section \ref{sec_lin} and \ref{sec_quad} are verified through the numerical examples. Additionally, The proposed methods are applied to the fractional partial differential equations; the time-fractional sub-diffusion and advection diffusion equation. Our conclusions are depicted in the last section.

\section{Second-order Predictor-Corrector Scheme with Linear Interpolation}\label{sec_lin}


Let $y_{j}$ be the approximation of $y(t_{j})$, $j=0,\ldots,N+1$. 
On each interval $I_j=[t_j,t_{j+1}]$, we interpolate $y(t)$ by a linear Lagrange polynomial then we have the following expression:
\begin{equation*}\label{lin_interp}
 y(t)= L^1_j y(t)+R_j(y(t)),
\end{equation*}
where
\begin{equation*}
	L^1_j y(t)=\frac{t_{j+1}-t}{h} y(t_j) +\frac{t-t_j}{h} y(t_{j+1}), \quad R_j(y(t))=\frac{y''(\xi_j)}{2}(t-t_j)(t-t_{j+1}), \xi_j \in (t_j, t_{j+1}), 
\end{equation*}
From (\ref{lin_sol}) we have
\begin{align} \label{lin_sol2}
	y(t_{n+1})&=g(t_{n+1}, y(t_{n+1}))
		+\beta \sum_{j=0}^{n} \int_{t_{j}}^{t_{j+1}} \left( L^1_{j}y(s)+ R_j(y(s)) \right)\exp\left[ -\beta(t_{n+1}-s)\right]ds,\\
		&=g(t_{n+1}, y(t_{n+1}))
		+\beta \sum_{j=0}^{n}(B^{1,j}_{n+1} y(t_{j})+B^{2,j}_{n+1} y(t_{j+1}))
		+T_{n+1}, \nonumber
\end{align}
where
\begin{align*}
	& B^{1,j}_{n+1}=\frac{1}{h}\int_{t_{j}}^{t_{j+1}}(t_{j+1}-s)\exp\left[ -\beta(t_{n+1}-s)\right]ds,\\
       & B^{2,j}_{n+1}=\frac{1}{h}\int_{t_{j}}^{t_{j+1}}(s-t_{j})\exp\left[ -\beta(t_{n+1}-s)\right]ds,\\
       & T_{n+1}=\beta \sum_{j=0}^{n} \int_{t_{j}}^{t_{j+1}}  R_{j}(y(s))\exp\left[ -\beta(t_{n+1}-s)\right]ds.\end{align*}
Then $y(t_{n+1})$ can be evaluated by solving the following equations:
\begin{align}\label{exactn}
	&\left[ 1-\beta B^{2,n}_{n+1}\right] y(t_{n+1})\\ \nonumber
	&=g(t_{n+1}, y(t_{n+1}))+\beta \left[ \left\{\sum_{j=0}^{n-1}(B^{1,j}_{n+1}y(t_{j})+B^{2,j}_{n+1} y(t_{j+1}))\right\}
		+B^{1,n}_{n+1}y(t_{n})\right]
		+T_{n+1}.
\end{align}
Therefore, the approximate value $y_{n+1}$ of $y(t_{n+1})$ can be obtained by solving the following scheme
\begin{equation*}
	\left[ 1-\beta B^{2,n}_{n+1}\right] y_{n+1}=g(t_{n+1}, y_{n+1})
		+\beta \left[ \left\{\sum_{j=0}^{n-1}(B^{1,j}_{n+1} y_{j}+B^{2,j}_{n+1} y_{j+1})\right\}+B^{1,n}_{n+1}y_{n}\right].
\end{equation*}
Using the linear interpolation of $f(t,y(t))$ with grid points $t_{n-1}$ and $t_{n}$, the value of $f(t,y(t))$ at $t=t_{n+1}$ can be evaluated by 
\begin{equation} \label{lin_g}
		 f(t_{n+1}, y(t_{n+1})) = -f(t_{n-1},y(t_{n-1}))+2 f(t_{n}, y(t_{n}))+f''(\eta,y(\eta)) h^2, \quad \eta \in (t_{n-1},t_{n}).
\end{equation}
Using (\ref{gx_lin}) and (\ref{lin_g}) the approximation of $g(t,y(t))$ at $t=t_{n+1}$ can be defined by 
\begin{equation}\label{gnp1}
	g_{n+1}=\frac{1-\alp}{M(\alp)}  (-f_{n-1}+2 f_n)+y_0 \exp\left[ -\beta t_{n+1}\right].
\end{equation}
Thus we propose the predictor-corrector scheme as follows
\begin{equation}\label{pecelin}
	\begin{split} 
	\left[ 1-\beta B^{2,n}_{n+1}\right] y_{n+1}&=g(t_{n+1}, y^p_{n+1})
		+\beta \left[ \left\{\sum_{j=0}^{n-1}(B^{1,j}_{n+1}y_{j}+B^{2,j}_{n+1} y_{j+1})\right\}+B^{1,n}_{n+1}y_{n}\right] \\  
	\left[ 1-\beta B^{2,n}_{n+1}\right] y^p_{n+1}&= g_{n+1}
		+\beta \left[ \left\{\sum_{j=0}^{n-1}(B^{1,j}_{n+1}y_{j}+B^{2,j}_{n+1} y_{j+1})\right\}+B^{1,n}_{n+1}y_{n}\right].
	\end{split}	
\end{equation}

\subsection{Error Analysis for Linear Interpolation }

From herein, we denote $C$ a generic constant which is independent of all grid parameters and may change case by case.

We need the following lemmas.

\begin{lem}\label{lem_interp_err}(\emph{Interpolation Errors})
Let $f\in\mathcal{C}^{n+1}[a,b]$ and $p_n\in\mathbb{P}_n[a,b]$ interpolate the function $f$ at the grid $\Phi_n$ in (\ref{intro_grid}) with $a=t_0$ and $b=t_n$, then there exists $\xi\in(a,b)$ such that, for any $t\in[a,b]$,
\begin{align*}
f(t)-p_n(t)=\frac{f^{(n+1)}(\xi)}{(n+1)!}\prod_{j=0}^n(t-t_j).
\end{align*}
\end{lem}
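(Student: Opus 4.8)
The plan is to prove this via the classical auxiliary-function-plus-Rolle argument, which is the standard route for the Lagrange interpolation remainder. First I would dispose of the trivial case: if $t$ coincides with one of the nodes $t_k$, then $f(t)-p_n(t)=0$ because $p_n$ interpolates $f$ at the grid, while the right-hand side also vanishes since the product $\prod_{j=0}^n(t-t_j)$ contains the factor $(t-t_k)=0$. Hence the identity holds for any choice of $\xi$, and it remains only to treat a fixed $t\in[a,b]$ that is not a node.

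For such a fixed $t$, I would introduce the auxiliary function
\begin{equation*}
\phi(s)=f(s)-p_n(s)-\lambda\prod_{j=0}^n(s-t_j),\qquad
\lambda:=\frac{f(t)-p_n(t)}{\prod_{j=0}^n(t-t_j)},
\end{equation*}
where $\lambda$ is well defined precisely because $t$ is not a node. By construction $\phi(t)=0$, and $\phi(t_j)=0$ for each $j=0,\ldots,n$ since both $f-p_n$ and the product annihilate the nodes. Thus $\phi$ has at least $n+2$ distinct zeros in $[a,b]$, and it inherits the regularity $\phi\in\mathcal{C}^{n+1}[a,b]$ from $f$ (recall $p_n$ and the product are polynomials).

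Next I would apply Rolle's theorem iteratively. Between consecutive zeros of $\phi$ its derivative $\phi'$ acquires a zero, so $\phi'$ has at least $n+1$ zeros in the open interval; repeating, $\phi^{(k)}$ has at least $n+2-k$ zeros, and after $n+1$ differentiations $\phi^{(n+1)}$ has at least one zero, say $\xi\in(a,b)$. The payoff is the derivative computation: since $\deg p_n\le n$ we have $p_n^{(n+1)}\equiv 0$, and since $\prod_{j=0}^n(s-t_j)$ is monic of degree $n+1$ its $(n+1)$-th derivative equals $(n+1)!$, so $\phi^{(n+1)}(s)=f^{(n+1)}(s)-\lambda\,(n+1)!$. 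Evaluating at $\xi$ and using $\phi^{(n+1)}(\xi)=0$ yields $\lambda=f^{(n+1)}(\xi)/(n+1)!$, and substituting this back into the definition of $\lambda$ gives the claimed formula.

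The argument is essentially routine, so there is no serious obstacle; the only points requiring care are the bookkeeping in the Rolle iteration (verifying that the zero count degrades by exactly one at each differentiation so that a zero of $\phi^{(n+1)}$ survives) and confirming that $\xi$ can be taken in the open interval $(a,b)$ rather than merely the closed one. Both follow from the strict interlacing of zeros produced by Rolle's theorem.
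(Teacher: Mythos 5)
Your proof is correct: the paper states this classical Lagrange remainder lemma without giving any proof, and your auxiliary-function-plus-iterated-Rolle argument is the standard textbook derivation, with the node case and the zero-counting handled properly. One small point worth noting: the paper's statement places the quantifiers in the order ``there exists $\xi$ such that for any $t$,'' which as written is false since $\xi$ must depend on $t$; your proof establishes the correct version, namely that for each fixed $t\in[a,b]$ there exists $\xi=\xi(t)\in(a,b)$ satisfying the identity.
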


\begin{lem}\label{lem_Gronwall}(\emph{Discrete Gronwall's Inequality}) (\cite{DK}, \cite{CX})
Let $\{a_n\}_{n=0}^N$, $\{b_n\}_{n=0}^N$ be non-negative sequences with $b_n$'s monotonic increasing, and satisfy that
\begin{align*}
a_n\le b_n+Mh^{\gamma}\sum_{j=0}^{n-1}(n-j)^{\gamma-1}a_j,\quad 0\le n\le N,
\end{align*}
where $M>0$ is bounded and independent of $h$, and $0<\gamma\le1$.
Then,
\begin{align*}
a_n\le b_nE_{\gamma}(M\Gamma(\gamma)(nh)^{\gamma}),
\end{align*}
where $E_{\gamma}(\cdot)$ is the Mittag-Leffler function (\cite{D}). It is noted that when $\gamma=1$, the Mittag-Leffler function becomes an exponential one.
\end{lem}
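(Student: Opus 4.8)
The plan is to prove the inequality by iterating the hypothesis and recognizing the resulting series as the Mittag-Leffler function. Write the hypothesis as $a_n\le b_n+(Qa)_n$, where $Q$ is the nonnegative discrete convolution operator defined by $(Qx)_n=Mh^{\gamma}\sum_{j=0}^{n-1}(n-j)^{\gamma-1}x_j$. Because every coefficient of $Q$ is nonnegative, $Q$ is monotone, i.e. it preserves $\le$ between nonnegative sequences, so substituting the bound for $a_j$ into itself $K$ times yields
\[
a_n\le\sum_{k=0}^{K-1}(Q^k b)_n+(Q^K a)_n,
\]
where $Q^k$ denotes the $k$-fold composition and $Q^0 b=b$. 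The whole argument then reduces to estimating the iterated convolutions $(Q^k b)_n$ and showing that the remainder $(Q^K a)_n$ vanishes as $K\to\infty$.

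First I would exploit the monotonicity of $\{b_n\}$. Expanding $(Q^k b)_n$ as a nested sum over indices $0\le j_k<\cdots<j_1<n$, the sequence $b$ appears only at the innermost index $j_k\le n$, so $b_{j_k}\le b_n$ and hence
\[
(Q^k b)_n\le b_n\,(Q^k\mathbf{1})_n,
\]
with $\mathbf{1}$ the constant sequence equal to $1$. It therefore suffices to bound $(Q^k\mathbf{1})_n$.

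The core of the proof is the claim, established by induction on $k$, that
\[
(Q^k\mathbf{1})_n\le\frac{[M\Gamma(\gamma)(nh)^{\gamma}]^k}{\Gamma(k\gamma+1)},\qquad k=0,1,2,\ldots.
\]
The base case $k=0$ is trivial, and the inductive step requires a discrete, weakly singular analogue of the Beta-function identity: substituting the hypothesis at level $k-1$, one must show that
\[
h^{\gamma}\sum_{j=0}^{n-1}(n-j)^{\gamma-1}(jh)^{(k-1)\gamma}\le\frac{\Gamma(\gamma)\,\Gamma((k-1)\gamma+1)}{\Gamma(k\gamma+1)}\,(nh)^{k\gamma},
\]
i.e. that this Riemann sum is dominated by the integral $\int_0^{nh}(nh-s)^{\gamma-1}s^{(k-1)\gamma}\,ds=B(\gamma,(k-1)\gamma+1)(nh)^{k\gamma}$. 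Granting this estimate, the Gamma factors telescope exactly as in the continuous fractional-integral calculus, reproducing the coefficient $1/\Gamma(k\gamma+1)$ and advancing the induction.

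Finally, summing over $k$ gives $\sum_{k=0}^{K-1}(Q^k b)_n\le b_n\sum_{k=0}^{K-1}\frac{[M\Gamma(\gamma)(nh)^{\gamma}]^k}{\Gamma(k\gamma+1)}$, whose limit as $K\to\infty$ is precisely $b_nE_{\gamma}(M\Gamma(\gamma)(nh)^{\gamma})$; the same bound, applied with $a_{j_K}\le\max_{0\le j\le n}a_j$, shows $(Q^K a)_n\to 0$ since its controlling term is the tail of a convergent series. The main obstacle is the discrete Beta estimate displayed above: because $0<\gamma\le 1$ makes the kernel $(n-j)^{\gamma-1}$ singular and monotone increasing near $j=n$, the product with $(jh)^{(k-1)\gamma}$ is not monotone, so comparing the Riemann sum to the integral demands care at the right endpoint, and controlling the accumulated endpoint contributions uniformly in $h$ and $k$ is the delicate point — this is exactly the weakly singular discrete Gronwall machinery of \cite{DK}.
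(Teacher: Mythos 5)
The paper itself gives no proof of this lemma: it is quoted, with citations, from \cite{DK} and \cite{CX}, so the only meaningful comparison is with the classical argument in those references --- and your outline is exactly that argument (iterate the inequality through the nonnegative convolution operator $Q$, bound $(Q^k b)_n\le b_n(Q^k\mathbf{1})_n$ via monotonicity of $b$, establish $(Q^k\mathbf{1})_n\le [M\Gamma(\gamma)(nh)^{\gamma}]^k/\Gamma(k\gamma+1)$ by induction through a discrete Beta estimate, and kill the remainder using the superexponential growth of $\Gamma(K\gamma+1)$). All of these steps are sound. The one thing to correct is your closing claim that the Riemann-sum comparison is delicate because the summand ``is not monotone'': it is monotone. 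For $0<\gamma\le1$ the factor $(nh-s)^{\gamma-1}$ is \emph{nondecreasing} in $s$ on $(0,nh)$ (the exponent $\gamma-1\le0$ acts on the decreasing quantity $nh-s$), and $s^{(k-1)\gamma}$ is nondecreasing for $k\ge1$; both factors are nonnegative, so $\phi(s)=(nh-s)^{\gamma-1}s^{(k-1)\gamma}$ is nondecreasing on $(0,nh)$. Consequently each term of your left-endpoint sum obeys $h\,\phi(jh)\le\int_{jh}^{(j+1)h}\phi(s)\,ds$, including the last cell $j=n-1$, where the improper integral converges because the singularity is integrable; summing gives
\begin{equation*}
h^{\gamma}\sum_{j=0}^{n-1}(n-j)^{\gamma-1}(jh)^{(k-1)\gamma}\le\int_0^{nh}(nh-s)^{\gamma-1}s^{(k-1)\gamma}\,ds=\frac{\Gamma(\gamma)\,\Gamma((k-1)\gamma+1)}{\Gamma(k\gamma+1)}\,(nh)^{k\gamma},
\end{equation*}
uniformly in $h$ and $k$, with no endpoint bookkeeping at all. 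With that observation your induction telescopes exactly as you wrote it, the series sums to $b_nE_{\gamma}(M\Gamma(\gamma)(nh)^{\gamma})$, and the remainder $(Q^Ka)_n\le\bigl(\max_{0\le j\le n}a_j\bigr)[M\Gamma(\gamma)(nh)^{\gamma}]^K/\Gamma(K\gamma+1)\to0$. So there is no genuine gap: your proposal is a correct reconstruction of the standard weakly singular discrete Gronwall proof, and the step you feared most is in fact the easiest one.
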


\begin{lem}\label{lem_1}

\begin{equation*}
1-\beta B^{2,n}_{n+1}=1-\frac{1}{2} \beta h+\mathcal{O}(h^2).
\end{equation*}
\end{lem}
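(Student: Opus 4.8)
The plan is to obtain a closed form for $B^{2,n}_{n+1}$ and then expand it in powers of $h$. Note first that $\beta=\alp/(1-\alp)$ is a fixed constant independent of the mesh size $h$ (see \eqref{gx_lin}), so any quantity of the form $\sum_{k\ge 2}c_k(\beta)\,h^k$ is legitimately absorbed into the $\mathcal{O}(h^2)$ remainder. I would start from the definition with $j=n$,
\[
B^{2,n}_{n+1}=\frac{1}{h}\int_{t_{n}}^{t_{n+1}}(s-t_{n})\exp\left[-\beta(t_{n+1}-s)\right]ds,
\]
and make the substitution $u=s-t_n$ (so that $t_{n+1}-s=h-u$ on $[t_n,t_{n+1}]$), which reduces the integral to the reference interval $[0,h]$:
\[
B^{2,n}_{n+1}=\frac{e^{-\beta h}}{h}\int_{0}^{h} u\,e^{\beta u}\,du.
\]

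Next I would evaluate $\int_0^h u\,e^{\beta u}\,du$ by a single integration by parts, obtaining $\frac{h}{\beta}e^{\beta h}-\frac{1}{\beta^2}\!\left(e^{\beta h}-1\right)$. Substituting this back and cancelling the factor $e^{-\beta h}$ produces the closed form
\[
B^{2,n}_{n+1}=\frac{1}{\beta}-\frac{1-e^{-\beta h}}{\beta^2 h},
\qquad\text{equivalently}\qquad
\beta B^{2,n}_{n+1}=1-\frac{1-e^{-\beta h}}{\beta h}.
\]

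Finally I would Taylor-expand $e^{-\beta h}=1-\beta h+\tfrac12\beta^2h^2-\tfrac16\beta^3h^3+\cdots$, so that $\frac{1-e^{-\beta h}}{\beta h}=1-\tfrac12\beta h+\tfrac16\beta^2h^2-\cdots$. Subtracting from $1$ gives $\beta B^{2,n}_{n+1}=\tfrac12\beta h-\tfrac16\beta^2h^2+\cdots$, and hence $1-\beta B^{2,n}_{n+1}=1-\tfrac12\beta h+\mathcal{O}(h^2)$, which is the claim. There is no genuine obstacle here: the computation is entirely elementary. The only point demanding a little care is the bookkeeping of the constant-versus-mesh dependence, so that the $\beta$-dependent coefficients of $h^2$ and higher powers are correctly hidden inside $\mathcal{O}(h^2)$ — which is exactly what the $h$-independence of $\beta$ guarantees.
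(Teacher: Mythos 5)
Your proof is correct and takes essentially the same route as the paper: both reduce the claim to the closed-form identity $1-\beta B^{2,n}_{n+1}=\frac{1}{\beta h}\left(1-e^{-\beta h}\right)$ and then Taylor-expand the exponential, you merely make explicit (via the substitution $u=s-t_n$ and one integration by parts) the ``simple calculation'' the paper leaves implicit. Your remark that the $\mathcal{O}(h^2)$ constant is legitimate because $\beta$ is independent of $h$ is a correct and welcome piece of bookkeeping.
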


\begin{proof}
A simple calculation of $B^{2,n}_{n+1}$ and the Taylor expansion of exponential function gives the following
\begin{align*}
	1-\beta B^{2,n}_{n+1}=\frac{1}{\beta h} \left[ 1-\exp (-\beta h)\right]
	&=\frac{1}{\beta h} \left[ 1-\left(1 -\beta h +\frac{1}{2} \left(\beta h\right)^2-\frac{1}{3!}(\beta h)^3+\cdots \right) \right]\\
	&=1-\frac{1}{2}(\beta h)+\frac{1}{3!}(\beta h)^2+\cdots.
\end{align*}
We then deduce the result. 
\end{proof}
\begin{lem}\label{lem_2}
For $j=0,\cdots,n$ and $i=1,2$,
\begin{equation*}
| B^{i,j}_{n+1}| \le h. 
\end{equation*}
\end{lem}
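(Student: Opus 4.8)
The plan is to bound each coefficient $B^{i,j}_{n+1}$ directly from its integral definition, exploiting that every factor in the integrand is nonnegative and that the exponential weight never exceeds one. Since the estimate is crude (any bound of the form $Ch$ would serve for the later error analysis), there is no need to compute the integrals exactly; I only need clean majorants.

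First I would record the relevant sign and size information. For $\alpha\in(0,1)$ we have $\beta=\alpha/(1-\alpha)>0$, so the kernel $\exp[-\beta(t_{n+1}-s)]$ is strictly positive; and on each subinterval $s\in[t_j,t_{j+1}]$ the two linear factors satisfy $0\le t_{j+1}-s\le h$ and $0\le s-t_j\le h$. Consequently both $B^{1,j}_{n+1}$ and $B^{2,j}_{n+1}$ are nonnegative, so the absolute values may be dropped and it suffices to bound the integrals themselves.

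Next, because the index range is $j=0,\dots,n$, each subinterval $[t_j,t_{j+1}]$ lies to the left of the evaluation point, so $s\le t_{j+1}\le t_{n+1}$ and the exponent $-\beta(t_{n+1}-s)$ is nonpositive. Hence $\exp[-\beta(t_{n+1}-s)]\le 1$ throughout the domain of integration. Replacing the exponential by $1$ and evaluating the elementary integrals $\int_{t_j}^{t_{j+1}}(t_{j+1}-s)\,ds=\int_{t_j}^{t_{j+1}}(s-t_j)\,ds=h^2/2$ gives, for $i=1,2$,
\begin{equation*}
|B^{i,j}_{n+1}|\le \frac{1}{h}\cdot\frac{h^2}{2}=\frac{h}{2}\le h,
\end{equation*}
which is the claimed bound (in fact a factor of two to spare).

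I expect no genuine obstacle here; the single point that must be checked with care is the ordering $s\le t_{j+1}\le t_{n+1}$, since this is exactly what forces the exponent to be nonpositive and the kernel to be bounded by one. This is guaranteed precisely by the restriction $j\le n$, so that the interpolation subinterval never lies to the right of $t_{n+1}$. If the evaluation point were interior to the subinterval the kernel could exceed one and the same one-line argument would require a separate justification, but that situation does not arise in \eqref{lin_sol2}.
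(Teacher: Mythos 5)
Your proof is correct and follows essentially the same route as the paper: both arguments rest on the kernel bound $\exp\left[-\beta(t_{n+1}-s)\right]\le 1$ for $s\le t_{n+1}$ together with a trivial majorization of the linear Lagrange factors on $[t_j,t_{j+1}]$. The only (inessential) difference is the order of the two estimates — the paper bounds $(t_{j+1}-s)/h\le 1$ and then the exponential integral by $h$, whereas you bound the exponential by $1$ and evaluate the polynomial integral exactly, which even yields the slightly sharper constant $h/2$.
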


\begin{proof}
For any $s \in [t_j,t_{j+1}]$, since
$$ \exp\left[ -\beta(t_{n+1}-s)\right] \le 1,$$
we have
\begin{equation*}
	B^{1,j}_{n+1}=\frac{1}{h}\int_{t_{j}}^{t_{j+1}}(t_{j+1}-s)\exp\left[ -\beta(t_{n+1}-s)\right]ds
	\le \int_{t_{j}}^{t_{j+1}}\exp\left[ -\beta(t_{n+1}-s)\right]ds \le h.
\end{equation*}
\end{proof}

\begin{lem}\label{lem_3}
For $y \in C^2[0, T]$, the truncation error $T_{n+1}$ can be estimated by
\begin{equation*}
 |T_{n+1}|= \mathcal{O}(h^2). 
\end{equation*}
\end{lem}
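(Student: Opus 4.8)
The plan is to insert the explicit Lagrange remainder into the definition of $T_{n+1}$ and then estimate the resulting integrals crudely, exploiting both the boundedness of $y''$ and the integrability of the exponential kernel. The hypothesis $y\in C^2[0,T]$ gives a finite bound $M_2:=\max_{t\in[0,T]}|y''(t)|<\infty$, which will absorb all the $y''(\xi_j)$ factors appearing in the $R_j$ terms.

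First I would substitute $R_j(y(s))=\tfrac12 y''(\xi_j)(s-t_j)(s-t_{j+1})$ into
\[
T_{n+1}=\beta\sum_{j=0}^n\int_{t_j}^{t_{j+1}} R_j(y(s))\exp\left[-\beta(t_{n+1}-s)\right]ds
\]
and take absolute values, obtaining
\[
|T_{n+1}|\le \frac{\beta M_2}{2}\sum_{j=0}^n\int_{t_j}^{t_{j+1}} |(s-t_j)(s-t_{j+1})|\exp\left[-\beta(t_{n+1}-s)\right]ds.
\]
Next I would use the elementary bound $|(s-t_j)(s-t_{j+1})|\le h^2/4$, valid for every $s\in[t_j,t_{j+1}]$ since the maximum of this quadratic is attained at the midpoint. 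Pulling this factor out, the remaining sum of integrals over the consecutive subintervals reassembles into a single integral over $[t_0,t_{n+1}]$:
\[
\sum_{j=0}^n\int_{t_j}^{t_{j+1}}\exp\left[-\beta(t_{n+1}-s)\right]ds=\int_0^{t_{n+1}}\exp\left[-\beta(t_{n+1}-s)\right]ds=\frac{1}{\beta}\bigl(1-e^{-\beta t_{n+1}}\bigr)\le\frac{1}{\beta}.
\]
Combining these estimates, the prefactor $\beta$ cancels against the $1/\beta$ from the kernel integral, leaving $|T_{n+1}|\le \frac{M_2}{8}\,h^2=\mathcal{O}(h^2)$, which is the claim.

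The step that deserves the most care is the summation over $j$. A purely term-by-term estimate would replace each subinterval integral by $\mathcal{O}(h^3)$, and since there are $n+1=\mathcal{O}(1/h)$ subintervals one must then verify that the accumulated error is $\mathcal{O}(h^2)$ rather than $\mathcal{O}(h^3)$; this works because $(n+1)h\le t_{n+1}\le T$. The cleaner route sketched above sidesteps the counting entirely: the exponentially decaying kernel is integrable with $L^1$-norm at most $1/\beta$ uniformly in $n$, so telescoping the subinterval integrals into one global integral yields a constant independent of $n$ (indeed independent of $T$). This is the only place where the \emph{non-singular}, exponentially decaying nature of the Caputo--Fabrizio kernel is genuinely exploited, and it is precisely what guarantees a uniform second-order truncation error.
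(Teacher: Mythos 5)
Your proof is correct and follows essentially the same route as the paper: bound $|y''(\xi_j)|$ by its sup norm, bound the quadratic factor uniformly by $\mathcal{O}(h^2)$, merge the subinterval integrals into a single integral of the exponential kernel over $[t_0,t_{n+1}]$, and cancel the prefactor $\beta$ against the resulting $\frac{1}{\beta}\left(1-e^{-\beta t_{n+1}}\right)$. The only difference is cosmetic: you use the sharper midpoint bound $|(s-t_j)(s-t_{j+1})|\le h^2/4$, yielding the constant $\frac{\|y''\|_{\infty}}{8}$ in place of the paper's $\frac{\|y''\|_{\infty}}{2}$.
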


\begin{proof}

\begin{equation*}
	\begin{split}
	 |T_{n+1}|&=\left|\beta \sum_{j=0}^{n} \int_{t_{j}}^{t_{j+1}}  R_{j}(y(s))\exp\left[ -\beta (t_{n+1}-s) \right]ds \right|\\
	            & \le \beta \frac{||y''||_{\infty}}{2} h^2 \int_{t_{0}}^{t_{n+1}} \exp\left[ -\beta (t_{n+1}-s)\right]ds
	             \le \frac{||y''||_{\infty}}{2} \left(1-\exp\left[ -\beta t_{n+1}\right]\right)   h^2.
	            \end{split}
\end{equation*}
\end{proof}

	


\begin{thm}\label{thm_pre_glo}(Global Error)
Suppose that $f(\cdot, u(\cdot))\in\mathcal{C}^2[0,T]$, and furthermore $f(\cdot, u(\cdot))$ is Lipschitz continuous in the second argument, i.e.,
\begin{align} \label{lipf}
|f(t,u_1)-f(t,u_2)|\le L|u_1-u_2|,\quad\forall u_1,u_2\in\mathbb{R}.
\end{align}
Let $E^P_{n+1}=y(t_{n+1})-y^p_{n+1}$ and $E_j=y(t_j)-y_j$. Then there exist a small $h \ll 1$ such that  global error $|E^P_{n+1}|$ is
\begin{equation}\label{pre_err}
|E^P_{n+1} |\le  \frac{2L(1-\alp)}{|M(\alp)|} (|E_{n-1}|+2|E_n | )+4 \beta h\sum_{j=1}^{n} |E_j|+\mathcal{O}(h^2)
\end{equation}
\end{thm}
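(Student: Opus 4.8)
The plan is to derive the error recursion by subtracting the predictor equation in (\ref{pecelin}) from the exact identity (\ref{exactn}); both share the prefactor $A:=1-\beta B^{2,n}_{n+1}$ and the same memory structure, so the $A\,y^p_{n+1}$ and $A\,y(t_{n+1})$ terms combine cleanly. Writing $E^P_{n+1}=y(t_{n+1})-y^p_{n+1}$ and $E_j=y(t_j)-y_j$, this subtraction yields
\begin{equation*}
A\,E^P_{n+1}=\bigl[g(t_{n+1},y(t_{n+1}))-g_{n+1}\bigr]+\beta\Bigl[\sum_{j=0}^{n-1}\bigl(B^{1,j}_{n+1}E_j+B^{2,j}_{n+1}E_{j+1}\bigr)+B^{1,n}_{n+1}E_n\Bigr]+T_{n+1},
\end{equation*}
so the estimate reduces to controlling three contributions separately: the $g$-interpolation error at $t_{n+1}$, the accumulated memory error, and the truncation term $T_{n+1}$.

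First I would treat the $g$-difference. By (\ref{gx_lin}) and (\ref{gnp1}) the terms $y_0\exp[-\beta t_{n+1}]$ cancel exactly, leaving $\frac{1-\alp}{M(\alp)}\bigl[f(t_{n+1},y(t_{n+1}))-(-f_{n-1}+2f_n)\bigr]$. Substituting the linear extrapolation (\ref{lin_g}) for $f(t_{n+1},y(t_{n+1}))$ and then adding and subtracting $f(t_{n-1},y(t_{n-1}))$ and $f(t_n,y(t_n))$, the Lipschitz hypothesis (\ref{lipf}) bounds the two resulting differences by $L|E_{n-1}|$ and $L|E_n|$, while the remainder $f''(\eta)h^2$ contributes $\mathcal{O}(h^2)$. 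This gives $|g(t_{n+1},y(t_{n+1}))-g_{n+1}|\le \frac{(1-\alp)L}{|M(\alp)|}(|E_{n-1}|+2|E_n|)+\mathcal{O}(h^2)$.

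Next I would bound the memory sum using Lemma \ref{lem_2}, which gives $|B^{i,j}_{n+1}|\le h$ uniformly in $i,j$. This turns the sum into $\beta h\bigl[\sum_{j=0}^{n-1}(|E_j|+|E_{j+1}|)+|E_n|\bigr]$; reindexing the two telescoped sums and invoking $E_0=0$ (the initial datum is taken exactly) collapses the coefficients so that every $|E_j|$ with $1\le j\le n$ carries weight exactly $2$, producing $2\beta h\sum_{j=1}^{n}|E_j|$. The truncation term is $\mathcal{O}(h^2)$ directly from Lemma \ref{lem_3}.

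The hard part will be removing the prefactor $A$. By Lemma \ref{lem_1}, $A=1-\tfrac12\beta h+\mathcal{O}(h^2)$, so for $h\ll1$ the quantity $A$ stays bounded away from zero and $A^{-1}\le 2$; multiplying the three bounds above by $A^{-1}$ and absorbing this factor doubles the coefficients, turning the $\frac{(1-\alp)L}{|M(\alp)|}$ and $2\beta h$ into the stated $\frac{2L(1-\alp)}{|M(\alp)|}$ and $4\beta h$, which is exactly (\ref{pre_err}). The delicate point is making the ``$h\ll1$'' threshold precise: one must fix $h$ small enough that $\beta h$ guarantees $A>0$ and $A^{-1}\le2$, and verify that the various $\mathcal{O}(h^2)$ pieces surviving the division retain constants independent of $n$ (using $1-\exp[-\beta t_{n+1}]\le1$ as in Lemma \ref{lem_3}) so that they may legitimately be lumped into a single $\mathcal{O}(h^2)$ uniform in the time level.
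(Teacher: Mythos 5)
Your proposal is correct and follows essentially the same route as the paper's proof: subtract the predictor line of (\ref{pecelin}) from (\ref{exactn}), bound $E_{g,n+1}$ via (\ref{lin_g})--(\ref{gnp1}) and the Lipschitz condition (\ref{lipf}), control the memory coefficients and truncation with Lemmas \ref{lem_2} and \ref{lem_3}, and use Lemma \ref{lem_1} to choose $h$ so that $\left(1-\beta B^{2,n}_{n+1}\right)^{-1}\le 2$, which doubles the constants exactly as in (\ref{pre_err}). Your explicit reindexing of the memory sum with $E_0=0$ to get weight $2$ on each $|E_j|$, $1\le j\le n$, is just a slightly more careful bookkeeping of the step the paper performs when it rewrites the sum as $B^{1,1}_{n+1}E_0+\sum_{j=1}^{n}(B^{2,j-1}_{n+1}+B^{1,j}_{n+1})E_j$.
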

\begin{proof}
Subtracting the equation of predictor in (\ref{pecelin}) from (\ref{exactn}) we have
\begin{align}
	\left[ 1-\beta B^{2,n}_{n+1}\right] E^P_{n+1}
	&=E_{g,n+1}+\beta \left[ \left\{\sum_{j=0}^{n-1}(B^{1,j}_{n+1}E_j+B^{2,j}_{n+1} E_{j+1})\right\}+B^{1,n}_{n+1}E_{n}\right]+T_{n+1}\\
	&=E_{g,n+1}+\beta \left[ B^{1,1}_{n+1} E_0+\sum_{j=1}^{n}(B^{2,j-1}_{n+1}+B^{1,j}_{n+1})E_j \right]+T_{n+1},
\end{align}
where $E_{g,n+1}=g(t_{n+1}, y(t_{n+1}))-g_{n+1}$. \\
By Lemma \ref{lem_1}, we can choose $h \ll 1$ for any $\alp \in (0,1)$ such that
\begin{equation}\label{apph}
	\frac{1}{2} \le 1-\beta B^{2,n}_{n+1}.
\end{equation}
Then, the Lemma \ref{lem_2} and \ref{lem_3} implies
\begin{align}\label{lin_ep}
	|E^P_{n+1}|
	&\le 2|E_{g,n+1}|+4 \beta h\sum_{j=0}^{n} |E_j|+\mathcal{O}(h^2).
\end{align}
Using $(\ref{lin_g})$ and $(\ref{gnp1})$ combined with Lipschitz condition of $f$ in (\ref{lipf}), we have
\begin{equation} \label{egnp1}
	|E_{g,n+1}| \le \frac{1-\alp}{|M(\alp)|} L  (|E_{n-1}|+2|E_n | )+\mathcal{O}(h^2).
\end{equation}
Substituting (\ref{egnp1}) into (\ref{lin_ep}), it completes the proof.
\end{proof}

\begin{thm}\label{thm_glo}(Global Error)
Suppose that $f(\cdot, u(\cdot))\in\mathcal{C}^2[0,T]$, and furthermore $f(\cdot, u(\cdot))$ is Lipschitz continuous in the second argument, i.e.,
\begin{align} 
|f(t,u_1)-f(t,u_2)|\le L|u_1-u_2|,\quad\forall u_1,u_2\in\mathbb{R},
\end{align}
then there exist a small $h \ll 1$ such that  global error $|E_{n+1}|$ is
\begin{equation*}
	 |E_{n+1}|
	\le \left\{ C_1( |E_{n-1}|+2|E_n|)+C_2h^2\right\}
	     \exp \left[ C_3  T \right],
\end{equation*}
where $C_i, i=1,2,3$ are independent constants of all grid parameters.
\end{thm}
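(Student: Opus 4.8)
The plan is to mirror the proof of Theorem~\ref{thm_pre_glo}, but now for the \emph{corrector} equation, and then to close the resulting recursion with the discrete Gronwall inequality of Lemma~\ref{lem_Gronwall}. First I would subtract the corrector equation (the first line of (\ref{pecelin})) from the exact identity (\ref{exactn}). Since both share the same coefficients $B^{i,j}_{n+1}$ and the same memory sum, everything cancels except the source terms, leaving
\[
\left[1-\beta B^{2,n}_{n+1}\right]E_{n+1}
=\Delta g_{n+1}+\beta\left[\sum_{j=0}^{n-1}(B^{1,j}_{n+1}E_j+B^{2,j}_{n+1}E_{j+1})+B^{1,n}_{n+1}E_n\right]+T_{n+1},
\]
where now $\Delta g_{n+1}=g(t_{n+1},y(t_{n+1}))-g(t_{n+1},y^p_{n+1})$ compares $g$ at the exact solution against $g$ at the \emph{predictor} value, rather than against the extrapolant $g_{n+1}$ as in the predictor analysis.

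The second step bounds $\Delta g_{n+1}$. Because the only $y$-dependence of $g$ in (\ref{gx_lin}) enters through $f$, the Lipschitz hypothesis (\ref{lipf}) gives $|\Delta g_{n+1}|\le\tfrac{(1-\alp)L}{|M(\alp)|}|E^P_{n+1}|$. Then, exactly as for the corrector analogue of (\ref{lin_ep}), I would use (\ref{apph}) (so that $1/(1-\beta B^{2,n}_{n+1})\le2$), the coefficient bound $|B^{i,j}_{n+1}|\le h$ of Lemma~\ref{lem_2}, the truncation estimate $|T_{n+1}|=\mathcal{O}(h^2)$ of Lemma~\ref{lem_3}, the reindexing of the memory sum, and $E_0=0$, to reach
\[
|E_{n+1}|\le\frac{2(1-\alp)L}{|M(\alp)|}\,|E^P_{n+1}|+4\beta h\sum_{j=1}^{n}|E_j|+\mathcal{O}(h^2).
\]

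The third step eliminates $|E^P_{n+1}|$ by inserting the predictor estimate (\ref{pre_err}). Writing $\mu=\tfrac{2(1-\alp)L}{|M(\alp)|}$, this yields a closed recursion in the corrector errors,
\[
|E_{n+1}|\le\mu^2\bigl(|E_{n-1}|+2|E_n|\bigr)+4\beta(1+\mu)\,h\sum_{j=1}^{n}|E_j|+C_2h^2,
\]
so that $C_1:=\mu^2$. Finally I would apply Lemma~\ref{lem_Gronwall} with $\gamma=1$: then $(n-j)^{\gamma-1}\equiv1$, the Mittag--Leffler function reduces to the exponential, and with $M=4\beta(1+\mu)$ and $nh\le T$ one obtains the asserted bound with $C_3=M$.

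The delicate point is this last step. The extrapolation built into the predictor forces the local errors $|E_{n-1}|$ and $|E_n|$ to appear with $\mathcal{O}(1)$ coefficient $C_1$, and these cannot be absorbed into the $h$-scaled sum that Gronwall controls; they must be carried as the source $b_{n+1}=C_1(|E_{n-1}|+2|E_n|)+C_2h^2$. To invoke Lemma~\ref{lem_Gronwall} rigorously one needs $\{b_n\}$ monotone increasing, so I expect the main care to lie in replacing $b_{n+1}$ by a monotone majorant (for instance using $\max_{j\le n}|E_j|$) and verifying that this does not alter the stated form. This is also why the conclusion is genuinely recursive: the bound still involves the two preceding errors, and extracting an unconditional $\mathcal{O}(h^2)$ rate from it relies on the homogeneous part of the recursion being stable for small $h$.
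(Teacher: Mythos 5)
Your proposal is correct and follows the paper's own argument essentially verbatim: subtract the corrector line of (\ref{pecelin}) from (\ref{exactn}), bound $|g(t_{n+1},y(t_{n+1}))-g(t_{n+1},y^p_{n+1})|$ via the Lipschitz condition by a multiple of $|E^P_{n+1}|$, use (\ref{apph}) together with Lemmas \ref{lem_2} and \ref{lem_3}, substitute the predictor bound (\ref{pre_err}), and close with the discrete Gronwall inequality of Lemma \ref{lem_Gronwall} at $\gamma=1$, arriving at the same constants $C_1=\left(\frac{2L(1-\alpha)}{|M(\alpha)|}\right)^2$ and $C_3=4\beta\left(\frac{2L(1-\alpha)}{|M(\alpha)|}+1\right)$. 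Your closing caveat---that the source $b_{n+1}=C_1(|E_{n-1}|+2|E_n|)+C_2h^2$ must be replaced by a monotone majorant before Lemma \ref{lem_Gronwall} strictly applies---is a legitimate refinement of a point the paper passes over in silence, not a departure from its route.
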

\begin{proof}
Subtracting the equation of predictor in (\ref{pecelin}) from (\ref{exactn}) and using the Lipschitz condition of $f$ in (\ref{lipf}), we have
\begin{equation}
	\left[ 1-\beta B^{2,n}_{n+1}\right] |E_{n+1}|
	\le \frac{L(1-\alp)}{|M(\alp)|} |E^P_{n+1}|+2 \beta h\sum_{j=0}^{n} |E_j|+|T_{n+1}|.
\end{equation}
By choosing $h$ in  (\ref{apph}) and applying the error of $|E^P_{n+1}|$ in (\ref{pre_err}),  we have
\begin{align*}
	 |E_{n+1}|
	&\le \frac{2L(1-\alp)}{|M(\alp)|} |E^P_{n+1}|+4 \beta h\sum_{j=0}^{n} |E_j|+2|T_{n+1}|\\
	&\le \left( \frac{2L(1-\alp)}{M(\alp)} \right)^2( |E_{n-1}|+2|E_n|)
		+4\beta  \left( \frac{2L(1-\alp)}{|M(\alp)|} +1\right) h\sum_{j=0}^n |E_j| +Ch^2,
\end{align*}
where $C$ is a generic constant.
By the Discrete Gronwall's inequality in Lemma \ref{lem_Gronwall} , we deduce 
\begin{equation*}
	 |E_{n+1}|
	\le \left\{ \left( \frac{2L(1-\alp)}{M(\alp)} \right)^2( |E_{n-1}|+2|E_n|)+Ch^2\right\}
	     \exp \left[4\beta \left( \frac{2L(1-\alp)}{|M(\alp)|} +1\right)   T \right].
\end{equation*}
\end{proof}

\begin{thm}\label{thm_err}(Global Error)
With the same assumptions as those of Theorem \ref{thm_glo}, we have
\begin{equation}\label{glb-err-linear}
	 |E_{n+1}| \le C h^2,
\end{equation}
if the starting error $|E_1| \le Ch^2$ is given.
\end{thm}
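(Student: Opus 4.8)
The plan is to establish \eqref{glb-err-linear} by induction on $n$, feeding the one-step estimate of Theorem \ref{thm_glo} into itself and using the starting-error hypothesis to seed the recursion. Writing $e_j:=|E_j|$, Theorem \ref{thm_glo} provides
\begin{equation*}
e_{n+1}\le \bigl\{C_1(e_{n-1}+2e_n)+C_2h^2\bigr\}\exp[C_3 T],
\end{equation*}
which controls the error at level $n+1$ by the two preceding levels plus a fixed $\mathcal{O}(h^2)$ consistency term. Since the corrector is a two-step scheme, two accurate starting values are needed: $e_0=|y(t_0)-y_0|=0$ because the initial datum is exact, and $e_1\le Ch^2$ is precisely the hypothesis. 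These form the base of the induction and, through the recurrence with $n=1$, already give $e_2=\mathcal{O}(h^2)$.

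First I would abbreviate $A:=C_1\exp[C_3 T]$ and $B:=C_2\exp[C_3 T]$, so that the inequality reads $e_{n+1}\le A(e_{n-1}+2e_n)+Bh^2$. Taking as induction hypothesis $e_j\le\mathcal{C}h^2$ for all $j\le n$, substitution yields
\begin{equation*}
e_{n+1}\le A(\mathcal{C}h^2+2\mathcal{C}h^2)+Bh^2=(3A\mathcal{C}+B)h^2,
\end{equation*}
so the induction closes as soon as $3A\mathcal{C}+B\le\mathcal{C}$; choosing $\mathcal{C}$ no smaller than both the starting constant and $B/(1-3A)$ then delivers $e_{n+1}\le\mathcal{C}h^2$, i.e. \eqref{glb-err-linear} with $C=\mathcal{C}$. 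Equivalently, and perhaps more transparently, I would track the monotone majorant $F_n:=\max_{0\le j\le n}e_j$, for which $F_{n+1}\le\max\bigl(F_n,\,3AF_n+Bh^2\bigr)$, and verify that $F_n\le\max\bigl(F_1,\,Bh^2/(1-3A)\bigr)$ for every $n$.

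The main obstacle is that the coefficient $C_1$ multiplying $(e_{n-1}+2e_n)$ is $\mathcal{O}(1)$ rather than $\mathcal{O}(h)$ --- it arises from the Lipschitz constant $L$ times $(1-\alp)/M(\alp)$ as the predictor error feeds the corrector --- so the difference inequality has genuinely constant coefficients and its stability is not automatic. Closing the induction requires the contraction condition $3A<1$, that is $3C_1\exp[C_3 T]<1$; the same threshold appears sharply in the characteristic roots $A\pm\sqrt{A^2+A}$ of the homogeneous recurrence $e_{n+1}=2Ae_n+Ae_{n-1}$, which lie in the unit disk exactly when $A<1/3$. Under this smallness condition on $L(1-\alp)/M(\alp)$ and $T$, which the theorem leaves implicit, the homogeneous part seeded by $e_0=0$ and $e_1=\mathcal{O}(h^2)$ stays $\mathcal{O}(h^2)$ while the $\mathcal{O}(h^2)$ forcing sums geometrically to $\mathcal{O}(h^2)$; this is exactly where the hypothesis $|E_1|\le Ch^2$ is decisive, since a less accurate start would be amplified through the $C_1$-factors and spoil the global second order. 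Were no such contraction available, the induction constant would grow like $(A+\sqrt{A^2+A})^{N}$ with $N=T/h$ and the uniform bound would degenerate as $h\to0$.
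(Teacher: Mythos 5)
Your route coincides with the paper's: the paper's entire proof of Theorem \ref{thm_err} is the single sentence that it follows ``by mathematical induction on $n$ and Theorem \ref{thm_glo}'', and your writeup simply carries out that induction, seeding it with $e_0=0$ and the hypothesis $e_1\le Ch^2$, and iterating the recurrence $e_{n+1}\le A(e_{n-1}+2e_n)+Bh^2$ with $A=C_1\exp[C_3T]$, $B=C_2\exp[C_3T]$. What you add --- and what makes your version more complete than the paper's --- is the explicit observation that this induction closes only under the contraction condition $3A<1$, i.e.\ a smallness restriction on $\kappa(\alpha)=2L(1-\alpha)/|M(\alpha)|$, since $C_1=\kappa(\alpha)^2$ in the proof of Theorem \ref{thm_glo}. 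Your diagnosis is accurate on all counts: the coefficient $C_1$ is $\mathcal{O}(1)$ rather than $\mathcal{O}(h)$ because in the Caputo--Fabrizio integral form the nonlinearity $f$ enters with the constant weight $(1-\alpha)/M(\alpha)$ (not with a factor of $h$, as it would through a Volterra kernel of Caputo type), so the extrapolated values $f_{n-1},f_n$ feed previous errors into $E_{n+1}$ with constant weight; since $A$ is independent of $h$, refining the grid cannot rescue the argument, and your characteristic-root computation (roots $A\pm\sqrt{A^2+A}$, inside the unit disk exactly when $A<1/3$) confirms the obstruction is genuine rather than an artifact of crude bounding. The paper never imposes, or even mentions, such a condition, so strictly speaking Theorem \ref{thm_err} as printed is incomplete: the one-line induction it invokes is exactly your induction, and it only closes when $3C_1\exp[C_3T]<1$ (e.g.\ $\alpha$ near $1$, or $L$ small, or $|M(\alpha)|$ large). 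Your proof is therefore correct as you state it, matches the paper's intended argument, and in addition identifies the implicit stability hypothesis the authors should have made explicit.
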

\begin{proof}
it can be shown by using mathematical induction on $n$ and Theorem \ref{thm_glo}.
\end{proof}
\section{Third-order Predictor-Corrector Scheme with Quadratic Interpolation}
\label{sec_quad}


In this section, we employ a quadratic interpolation of $y(t)$ over each interval $I_j=[t_j,t_{j+1}]$.
For each $I_j, j\ge 1$, we interpolate $y(t)$ by a quadratic Lagrange polynomial
\begin{align*}\label{quad_interp}
y(t)= L^2_j y(t)+R^2_j(t),
\end{align*}
where
\begin{align*}
	L^2_j y(t)&=\sum_{k=j-1}^{j+1} Q^j_k(t) y(t_{k}), \quad Q^j_k(t)=\prod_{\substack{m=j-1\\m\ne k}}^{j+1}\frac{t-t_m}{t_k-t_m},\\
	R^2_j y(t)&=\frac{y'''(\xi_j)}{6}(t-t_{j-1})(t-t_j)(t-t_{j+1}), \xi_j \in (t_{j-1}, t_{j+1}).
\end{align*}
On  $I_0=[t_0,t_1]$, $y(t)$ is interpolated by using the grid points $t_0,t_{1/2}$ and $t_1$
\begin{equation*}\label{quad_y0}
y(t) = L^2_0 y(t)+R^2_0(t),
\end{equation*}
where
\begin{align*}
	 L^2_0 y(t)&=Q^0_0 y(t_{0}) +Q^0_{\frac{1}{2}}(t)y(t_{\frac{1}{2}})+ Q^0_1(t) y(t_1), \quad
	 R^2_0 y(t)=\frac{y'''(\xi_0)}{6}(t-t_{1})(t-t_{1/2})(t-t_{1}), \xi_0 \in (t_{0}, t_{1}),\\ 
	 Q^0_0(t)&=\frac{(t-t_{\frac{1}{2}})(t-t_1)}{(t_0-t_{\frac{1}{2}})(t_0-t_1)},
	 Q^0_{1/2}(t)=\frac{(t-t_{0})(t-t_1)}{(t_{\frac{1}{2}}-t_{0})(t_{\frac{1}{2}}-t_1)},
	 Q^0_1(t)=\frac{(t-t_0)(t-t_{\frac{1}{2}})}{(t_1-t_{0})(t_1-t_{\frac{1}{2}})}.
 \end{align*}
Thus, from (\ref{lin_sol2}), we have
\begin{align} \label{quad_sol}
	y(t_{n+1})&=g(t_{n+1}, y(t_{n+1}))+\beta [ A^{0,0}_{n+1}y(t_{0})+A^{1,0}_{n+1} y(t_{1/2})+A^{2,0}_{n+1} y(t_{1})]\\
		&+\beta \sum_{j=1}^{n}[A^{0,j}_{n+1}y(t_{j-1})+A^{1,j}_{n+1} y(t_{j})+A^{2,j}_{n+1}y(t_{j+1})]+T_{n+1}. \nonumber
\end{align}
where
\begin{align*}
A^{0,0}_{n+1}&=\int_{t_0}^{t_{1}}Q^0_0(s) \exp\left[ -\beta(t_{n+1}-s)\right]ds,\\
A^{1,0}_{n+1}&=\int_{t_0}^{t_{1}}Q^0_{1/2}(s)\exp\left[ -\beta(t_{n+1}-s)\right]ds,\\
A^{2,0}_{n+1}&=\int_{t_0}^{t_{1}}Q^0_1(s)\exp\left[ -\beta(t_{n+1}-s)\right]ds,
\end{align*}
and for $1\le j\le n$,
\begin{align*}
A^{i,j}_{n+1}&=\int_{t_j}^{t_{j+1}}Q^j_{i+j-1}(s)\exp\left[ -\beta(t_{n+1}-s)\right]ds, \quad i=0,1,2, \\
 T_{n+1}&=\beta \sum_{j=0}^{n} \int_{t_{j}}^{t_{j+1}}  R^2_{j}(y(s))\exp\left[ -\beta(t_{n+1}-s) \right]ds.
\end{align*}
Now, the value of $f(t_{n+1}, y(t_{n+1}))$ can be approximated by using the quadratic interpolation of $f(t,y(t))$ with grid points $t_{n-2}, t_{n-1}$ and $t_{n}$:
\begin{equation} \label{quad_f}
	f(t_{n+1}, y(t_n)) = f(t_{n-2},y(t_{n-2}))-3 f(t_{n-1}, y(t_{n-1}))+3f(t_{n},y(t_{n}))+f'''(\eta,y(\eta)) h^3, \quad \eta \in (t_{n-2},t_{n}).
\end{equation}
Thus we propose the predictor-corrector scheme as follow
\begin{equation}\label{pecequad}
\begin{split} 
	\left[ 1-\beta A^{2,n}_{n+1}\right] y_{n+1}
	&=g(t_{n+1}, y^p_{n+1})+\beta( A^{0,0}_{n+1}y_{0}+A^{1,0}_{n+1} y_{1/2}+A^{2,0}_{n+1} y_{1}) \\
		&+\beta \left[\left\{ \sum_{j=1}^{n-1}(A^{0,j}_{n+1}y_{j-1}+A^{1,j}_{n+1} y_{j}+A^{2,j}_{n+1}y_{j+1})\right\}+
		A^{0,n}_{n+1}y_{n-1}+A^{1,n}_{n+1} y_{n}\right] \\
	\left[ 1-\beta A^{2,n}_{n+1}\right] y^P_{n+1}
	&=g_{n+1}+\beta( A^{0,0}_{n+1}y_{0}+A^{1,0}_{n+1} y_{1/2}+A^{2,0}_{n+1} y_{1})   \\
		&+\beta \left[\left\{ \sum_{j=1}^{n-1}(A^{0,j}_{n+1}y_{j-1}+A^{1,j}_{n+1} y_{j}+A^{2,j}_{n+1}y_{j+1})\right\}+
		A^{0,n}_{n+1}y_{n-1}+A^{1,n}_{n+1} y_{n}\right] ,  
\end{split}
\end{equation}
where 
\begin{equation}\label{q_gnp}
	g_{n+1}=\frac{1-\alp}{M(\alp)}  (f_{n-2}-3 f_{n-1}+3f_{n})+y_0 \exp\left[ -\beta t_{n+1}\right], n\ge 2.
\end{equation}
\begin{rem}
	We calculate $y_1$ and $y_2$ using the start-up algorithm described in \ref{appendix-start-up}.
\end{rem}
\subsection{Error Analysis of Quadratic Interpolation}
By the similar procedure in the linear case, we can obtain the error analysis of the scheme using the quadratic interpolation. 
A simple calculation of $A^{2,n}_{n+1}$ and the Taylor expansion of the exponential function provides
\begin{equation*}
1-\beta A^{2,n}_{n+1}=1-\frac{5}{12} \beta h+\frac{1}{8}\beta^2 h^2+\mathcal{O}(h^3).
\end{equation*}
Then it is easy to see that for all $\alp \in(0,1)$
\begin{equation}\label{q_h}
	1-\frac{5}{12}\beta  h+\frac{1}{8}\beta^2 h^2 \ge \frac{47}{72}.
\end{equation}  
\begin{lem}\label{lem_q1}
For $j=0,\cdots,n$ and $i=0,1,2$,
\begin{equation*}
| A^{i,j}_{n+1}| \le h, \quad  |T_{n+1}| \le Ch^3, 
\end{equation*}
where $C$ is an independent constant.
\end{lem}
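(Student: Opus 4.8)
The plan is to prove the two estimates separately, each one mirroring its counterpart in the linear analysis (Lemma \ref{lem_2} and Lemma \ref{lem_3}); the only genuinely new ingredient is a uniform bound on the quadratic Lagrange basis functions over the relevant subinterval.

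For the weights $A^{i,j}_{n+1}$ I would argue exactly as in Lemma \ref{lem_2}: since $\exp[-\beta(t_{n+1}-s)] \le 1$ on each subinterval, one has $|A^{i,j}_{n+1}| \le \int_{t_j}^{t_{j+1}} |Q^j_{i+j-1}(s)|\,ds$, so the task reduces to showing that each basis polynomial has modulus at most $1$ on the integration interval $[t_j,t_{j+1}]$. Unlike the linear case, where the weights lie automatically in $[0,1]$, this is not obvious for a quadratic interpolant, so I would verify it by the change of variable $s=t_j+\theta h$ with $\theta\in[0,1]$. For $j\ge 1$ the nodes $t_{j-1},t_j,t_{j+1}$ give $Q^j_{j-1}=\tfrac12\theta(\theta-1)$, $Q^j_j=1-\theta^2$ and $Q^j_{j+1}=\tfrac12\theta(\theta+1)$, each bounded by $1$ in modulus on $[0,1]$. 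The exceptional interval $I_0$, built from $t_0,t_{1/2},t_1$, must be handled separately, and the same substitution shows $Q^0_0,Q^0_{1/2},Q^0_1$ also have moduli at most $1$ on $[0,1]$. Combining with the trivial bound on the exponential yields $|A^{i,j}_{n+1}|\le h$ for all admissible $i,j$.

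For the truncation error I would follow the proof of Lemma \ref{lem_3}, replacing the quadratic remainder by the cubic one. On each subinterval the node product satisfies $|(s-t_{j-1})(s-t_j)(s-t_{j+1})| = |\theta(\theta^2-1)|\,h^3 \le \tfrac{2}{3\sqrt3}\,h^3$ after the same scaling, so that $|R^2_j(y(s))| \le C\|y'''\|_\infty h^3$ for $y\in\mathcal{C}^3[0,T]$ (with the analogous estimate on $I_0$). Summing over $j$ and using $\beta\int_{t_0}^{t_{n+1}}\exp[-\beta(t_{n+1}-s)]\,ds = 1-\exp[-\beta t_{n+1}] \le 1$ then collapses the memory sum and delivers $|T_{n+1}| \le C h^3$.

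The main obstacle I expect is the uniform bound $|Q^j_k|\le 1$: this is the one place where the argument does not carry over verbatim from the linear case, and it relies crucially on the integration interval $[t_j,t_{j+1}]$ being one of the interpolation subintervals (so that $\theta$ stays in $[0,1]$) rather than an extrapolation region, where the basis functions would grow beyond $1$. Verifying the same bound on the exceptional interval $I_0$ with its half-step node $t_{1/2}$ is a minor but necessary additional check.
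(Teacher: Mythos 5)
Your proposal is correct and follows exactly the route the paper intends: Lemma \ref{lem_q1} is stated without proof, with the text appealing to ``the similar procedure in the linear case'' (Lemmas \ref{lem_2} and \ref{lem_3}), which is precisely what you carry out, and your explicit verifications --- $Q^j_{j-1}=\tfrac12\theta(\theta-1)$, $Q^j_j=1-\theta^2$, $Q^j_{j+1}=\tfrac12\theta(\theta+1)$ on $[t_j,t_{j+1}]$, the separate check on $I_0$ with nodes $t_0,t_{1/2},t_1$, the bound $|\theta(\theta^2-1)|\le \tfrac{2}{3\sqrt{3}}$, and the collapse $\beta\int_{t_0}^{t_{n+1}}\exp\left[-\beta(t_{n+1}-s)\right]ds=1-\exp\left[-\beta t_{n+1}\right]\le 1$ --- are all accurate. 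You also correctly identify the one non-trivial new point, namely that $|Q^j_k|\le 1$ holds because $[t_j,t_{j+1}]$ is an interpolation (not extrapolation) subinterval, which the paper's linear-case argument takes for granted.
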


\begin{thm}\label{thm_pre_qglo}(Global Error)
Suppose that $f(\cdot, u(\cdot))\in\mathcal{C}^3[0,T]$, and furthermore is Lipschitz continuous in the second argument in (\ref{lipf}), then there exist a small $h \ll 1$ such that  $|E^P_{n+1}|$ is
\begin{equation*}
|E^P_{n+1} |\le 2 \beta h |E_{1/2}|+\frac{2L(1-\alp)}{|M(\alp)|}  (|E_{n-2}|+3|E_{n-1}|+3|E_n | ) )
	+6 \beta h \sum_{j=1}^{n}|E_{j}|+\mathcal{O}(h^3).
\end{equation*}
\end{thm}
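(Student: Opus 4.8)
The plan is to mirror the argument of Theorem \ref{thm_pre_glo}, replacing the linear quantities $B^{i,j}_{n+1}$ by their quadratic analogues $A^{i,j}_{n+1}$. First I would subtract the predictor equation in (\ref{pecequad}) from the exact identity (\ref{quad_sol}). Since the zeroth-order pieces $y_0\exp[-\beta t_{n+1}]$ cancel, this produces
$$[1-\beta A^{2,n}_{n+1}]E^P_{n+1} = E_{g,n+1} + \beta\bigl(A^{0,0}_{n+1}E_0 + A^{1,0}_{n+1}E_{1/2} + A^{2,0}_{n+1}E_1\bigr) + \beta\Bigl[\sum_{j=1}^{n-1}\bigl(A^{0,j}_{n+1}E_{j-1} + A^{1,j}_{n+1}E_j + A^{2,j}_{n+1}E_{j+1}\bigr) + A^{0,n}_{n+1}E_{n-1} + A^{1,n}_{n+1}E_n\Bigr] + T_{n+1},$$
where $E_{g,n+1}=g(t_{n+1},y(t_{n+1}))-g_{n+1}$, $E_j=y(t_j)-y_j$, and $E_{1/2}=y(t_{1/2})-y_{1/2}$.

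Next I would invert the factor on the left using (\ref{q_h}): since $1-\beta A^{2,n}_{n+1}\ge 47/72$, we have $(1-\beta A^{2,n}_{n+1})^{-1}\le 72/47<2$ uniformly for $\alp\in(0,1)$, so every term on the right is amplified by a factor of at most $2$. Applying Lemma \ref{lem_q1} to bound $|A^{i,j}_{n+1}|\le h$ and $|T_{n+1}|\le Ch^3$, and using $E_0=0$, the memory sum is controlled by $\beta h$ times a sum of the $|E_j|$. The only delicate point here is the bookkeeping: because the quadratic stencil on each $I_j$ spans three consecutive nodes, each integer index $E_j$ with $1\le j\le n$ is hit at most three times across the three coefficient families, which produces the factor $6\beta h$ after multiplication by $2$; the half-node $E_{1/2}$ enters only through $A^{1,0}_{n+1}$ on the start-up interval $I_0$, so it is separated out as the isolated term $2\beta h|E_{1/2}|$.

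It then remains to estimate $E_{g,n+1}$. Combining the definition (\ref{q_gnp}) of $g_{n+1}$ with the quadratic extrapolation formula (\ref{quad_f}) for $f(t_{n+1},y(t_{n+1}))$, the exponential pieces cancel and $E_{g,n+1}$ equals $\frac{1-\alp}{M(\alp)}$ times $[f(t_{n-2},y(t_{n-2}))-f_{n-2}]-3[f(t_{n-1},y(t_{n-1}))-f_{n-1}]+3[f(t_n,y(t_n))-f_n]$ plus the $\mathcal{O}(h^3)$ remainder $f'''(\eta,y(\eta))h^3$. Invoking the Lipschitz bound (\ref{lipf}) in the second argument yields
$$|E_{g,n+1}|\le \frac{(1-\alp)L}{|M(\alp)|}\bigl(|E_{n-2}|+3|E_{n-1}|+3|E_n|\bigr)+\mathcal{O}(h^3).$$
Substituting this, together with the memory-sum and truncation bounds, into the amplified inequality gives the claimed estimate.

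I expect the main obstacle to be the purely combinatorial bookkeeping of how many times each error $E_j$ appears in the memory sum once the three families $A^{0,j}_{n+1},A^{1,j}_{n+1},A^{2,j}_{n+1}$ are reindexed to a common running index. Care is needed at both endpoints of the sum: the $n$th interval contributes only $A^{0,n}_{n+1}$ and $A^{1,n}_{n+1}$, since the coefficient $A^{2,n}_{n+1}$ multiplying $E_{n+1}$ has been moved to the left-hand side, and the start-up interval forces the half-node contribution to be isolated rather than absorbed into $\sum_{j=1}^n|E_j|$.
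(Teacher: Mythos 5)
Your proposal is correct and follows essentially the same route as the paper's proof: subtract the predictor equation in (\ref{pecequad}) from (\ref{quad_sol}), use (\ref{q_h}) to bound the amplification factor by $2$, estimate $E_{g,n+1}$ via (\ref{quad_f}), (\ref{q_gnp}) and the Lipschitz condition (\ref{lipf}), then apply Lemma \ref{lem_q1} with $E_0=0$ and collect each $E_j$ at most three times to get $6\beta h\sum_{j=1}^n|E_j|$ plus the isolated $2\beta h|E_{1/2}|$. The only cosmetic caveat is your claim that $(1-\beta A^{2,n}_{n+1})^{-1}\le 72/47$ holds uniformly in $\alp$: since $1-\beta A^{2,n}_{n+1}$ agrees with the quadratic in (\ref{q_h}) only up to $\mathcal{O}(h^3)$ (with constant depending on $\beta$), one still needs $h\ll1$ for the given $\alp$, which is exactly the paper's choice $\frac{1}{2}\le 1-\beta A^{2,n}_{n+1}$ in (\ref{q_apph}).
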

\begin{proof}
Applying the Lipschitz condition of $f$ and using (\ref{quad_f}) and (\ref{q_gnp}), we have 
\begin{equation}\label{q_gerr}
	|g(t_{n+1}, y(t_{n+1}))-g_{n+1}| \le \frac{1-\alp}{|M(\alp)|} L  (|E_{n-2}|+3|E_{n-1}|+3|E_n | )+\mathcal{O}(h^3).
\end{equation}
From (\ref{q_h}), we can choose $h \ll 1$ for the given $\alp \in (0,1)$ such that
\begin{equation}\label{q_apph}
	\frac{1}{2} \le 1-\beta A^{2,n}_{n+1}.
\end{equation}
For the above $h$, subtracting  (\ref{pecequad}) from (\ref{quad_sol}) combined with (\ref{q_gerr}) and (\ref{q_h}), we have 
\begin{align*}
	|E^P_{n+1}|
	&\le \frac{2L(1-\alp)}{|M(\alp)|}  (|E_{n-2}|+3|E_{n-1}|+3|E_n | ) +\mathcal{O}(h^3)+
	2 \beta(A^{1,0}_{n+1} |E_{1/2}|+A^{2,0}_{n+1} |E_{1}|)  \ \\
		&+2 \beta \left[\left\{ \sum_{j=1}^{n-1}(A^{0,j}_{n+1}|E_{j-1}|+A^{1,j}_{n+1} |E_{j}|+A^{2,j}_{n+1}|E_{j+1}|)\right\}+
		A^{0,n}_{n+1}|E_{n-1}|+A^{1,n}_{n+1} |E_{n}|\right].
\end{align*}
By Lemma \ref{lem_q1}, we deduce that
\begin{equation*}
	|E^P_{n+1}|
	\le 2 \beta h |E_{1/2}|+\frac{2L(1-\alp)}{|M(\alp)|}  (|E_{n-2}|+3|E_{n-1}|+3|E_n | ) 
	+6 \beta h \sum_{j=1}^{n}|E_{j}|+\mathcal{O}(h^3).
\end{equation*}
\end{proof}

\begin{thm}\label{thm_q_glo}(Global Error)
With the same assumptions as those of Theorem $\ref{thm_pre_qglo}$, 
then there exist a small $h \ll 1$ such that  global error $|E_{n+1}|$ is
\begin{equation*}
	 |E_{n+1}|
	\le \left\{ C_1(|E_{n-2}|+ 3|E_{n-1}|+3|E_n|)+C_2 h |E_{1/2}|+C_3h^3\right\}
	     \exp \left[ C_4  T \right],
\end{equation*}
where $C_i, i=1,...,4$ are independent constant of all grid parameters.
\end{thm}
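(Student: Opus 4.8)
The plan is to mirror the proof of Theorem \ref{thm_glo}, substituting the quadratic ingredients for the linear ones. First I would subtract the corrector equation in (\ref{pecequad}) from the exact identity (\ref{quad_sol}) evaluated at $t_{n+1}$. The left-hand side becomes $\left[1-\beta A^{2,n}_{n+1}\right]E_{n+1}$; the right-hand side contains the difference $g(t_{n+1},y(t_{n+1}))-g(t_{n+1},y^p_{n+1})$, the quadrature-weight terms $\beta A^{i,j}_{n+1}$ multiplying the nodal errors $E_{j-1},E_j,E_{j+1}$ (together with the start-up weights acting on $E_0,E_{1/2},E_1$), and the truncation term $T_{n+1}$.

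Next I would bound each piece. By (\ref{q_h}) and (\ref{q_apph}) I can fix $h\ll1$ so that $1-\beta A^{2,n}_{n+1}\ge\frac12$, which lets me divide through and absorb the reciprocal into a factor $2$. The Lipschitz hypothesis (\ref{lipf}) controls the first term by $\frac{L(1-\alp)}{|M(\alp)|}|E^P_{n+1}|$, and Lemma \ref{lem_q1} gives $|A^{i,j}_{n+1}|\le h$ and $|T_{n+1}|\le Ch^3$. Collecting everything produces an inequality of the form
\[
|E_{n+1}|\le \frac{2L(1-\alp)}{|M(\alp)|}|E^P_{n+1}|+2\beta h|E_{1/2}|+C\beta h\sum_{j=0}^{n}|E_j|+Ch^3.
\]

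Then I would insert the predictor estimate of Theorem \ref{thm_pre_qglo} for $|E^P_{n+1}|$. Since that bound is again a combination of the local group $|E_{n-2}|+3|E_{n-1}|+3|E_n|$, a start-up contribution $\beta h|E_{1/2}|$, a memory sum $\beta h\sum_{j=1}^{n}|E_j|$, and an $\mathcal{O}(h^3)$ remainder, the composition preserves the same shape: a local part $C_1(|E_{n-2}|+3|E_{n-1}|+3|E_n|)$, a start-up part $C_2 h|E_{1/2}|$, a memory sum $C\beta h\sum_{j=0}^{n}|E_j|$, and a truncation part $C_3 h^3$. I would then cast this into the hypothesis of Lemma \ref{lem_Gronwall} with $\gamma=1$, taking $a_j=|E_j|$, letting $b_n$ absorb the monotone non-sum contributions, and matching the sum to $Mh^{\gamma}\sum_{j=0}^{n-1}(n-j)^{\gamma-1}a_j$ with $M=C\beta$. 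Because the Mittag-Leffler function reduces to an exponential at $\gamma=1$, Gronwall's inequality yields $|E_{n+1}|\le b_n\exp[C_4 T]$, which is the claimed estimate.

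I expect the main obstacle to be the bookkeeping required to present the inequality in the exact form demanded by Lemma \ref{lem_Gronwall}. Concretely, the local-group terms $|E_{n-1}|$ and $|E_n|$ appear both inside the prospective $b_n$ and inside the memory sum, so I must use $h\ll1$ to move the near-diagonal contributions to the left-hand side before applying Gronwall, and I must verify that the resulting $b_n$ is genuinely monotone increasing in $n$, exactly as in the linear case but now carrying the extra start-up term $h|E_{1/2}|$ through the estimate.
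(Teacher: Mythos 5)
Your proposal matches the paper's proof essentially step for step: subtract (\ref{pecequad}) from (\ref{quad_sol}), use (\ref{q_apph}) to divide through by $1-\beta A^{2,n}_{n+1}\ge\frac{1}{2}$, bound the weights and truncation via Lemma \ref{lem_q1} and the $g$-difference via the Lipschitz condition so that it is controlled by $|E^P_{n+1}|$, insert the predictor bound of Theorem \ref{thm_pre_qglo}, and conclude with the discrete Gronwall inequality of Lemma \ref{lem_Gronwall} at $\gamma=1$, where the Mittag-Leffler function degenerates to an exponential. The bookkeeping subtleties you flag at the end (the near-diagonal term $|E_n|$ appearing in both the memory sum and the prospective $b_n$, and the monotonicity requirement on $b_n$) are genuine but are passed over silently in the paper's own proof as well, so they represent added care rather than a different route.
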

\begin{proof}
For the choice of $h \ll 1$ in (\ref{q_apph}), subtracting  (\ref{pecequad}) from (\ref{quad_sol}) and using the Lipschitz condition of $f$ in (\ref{lipf}), we have
\begin{equation} \label{q_enp1}
	 |E_{n+1}|
	\le \frac{2L(1-\alp)}{|M(\alp)|} |E^P_{n+1}|+2 \beta h |E_{1/2}|+6 \beta h \sum_{j=1}^{n}|E_{j}|+2|T_{n+1}|.
\end{equation}
By substituting $E^P_{n+1}$ in Theorem \ref{thm_pre_qglo} into (\ref{q_enp1}), we deduce 
\begin{align*}
	 |E_{n+1}|
	&\le \left( \frac{2L(1-\alp)}{M(\alp)} \right)^2(|E_{n-2}|+ 3|E_{n-1}|+3|E_n|)
	+2 \beta \left( \frac{2L(1-\alp)}{|M(\alp)|} +1\right) h |E_{1/2}|\\
		&+ 6\beta\left( \frac{2L(1-\alp)}{|M(\alp)|} +1\right) h\sum_{j=0}^n |E_j| +Ch^3,
\end{align*}
where $C$ is a generic constant.
By the Discrete Gronwall's inequality in Lemma \ref{lem_Gronwall} , we deduce 
\begin{equation*}
	 |E_{n+1}|
	\le \left\{ \left( \kappa(\alp) \right)^2(|E_{n-2}|+ 3|E_{n-1}|+3|E_n|)
	+2\beta \left( \kappa(\alp) +1\right)h |E_{1/2}|+Ch^3\right\}
	     \exp \left[ 4\beta\left(\kappa(\alp)+1\right)  T \right],
\end{equation*}
where $\displaystyle{\kappa(\alp)=\frac{2L(1-\alp)}{|M(\alp)|}}.$
\end{proof}
Applying the mathematical induction on $n$ in the Theorem \ref{thm_q_glo},
we have the following theorem
\begin{thm}\label{thm_q_err}(Global Error)
With the same assumptions as those of Theorem \ref{thm_q_glo}, we have
\begin{equation*}
	 |E_{n+1}| \le C h^3,
\end{equation*}
given $|E_{1/2}| \le C_1 h^2$, and $|E_1| \le C_2h^3$, $|E_2| \le C_3h^3$.
\end{thm}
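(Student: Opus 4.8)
The plan is to argue by strong induction on $n$, in direct parallel with the proof of Theorem \ref{thm_err} for the linear scheme, using the one-step bound of Theorem \ref{thm_q_glo} as the driving estimate. Writing $\kappa(\alp)=\frac{2L(1-\alp)}{|M(\alp)|}$, that estimate reads
\begin{equation*}
|E_{n+1}|\le\Bigl\{(\kappa(\alp))^2\bigl(|E_{n-2}|+3|E_{n-1}|+3|E_n|\bigr)+2\beta(\kappa(\alp)+1)\,h|E_{1/2}|+Ch^3\Bigr\}\exp\!\left[4\beta(\kappa(\alp)+1)T\right].
\end{equation*}
The key structural observation is that every term feeding $|E_{n+1}|$ is either a constant multiple of three earlier errors, or the half-step error multiplied by an extra factor $h$, or an explicit $\mathcal{O}(h^3)$ truncation term; all three types are $\mathcal{O}(h^3)$ once the earlier errors are.

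First I would set up the base of the induction. Because $y_0$ is prescribed exactly, $E_0=0$, while the start-up algorithm of \ref{appendix-start-up} furnishes $|E_{1/2}|\le C_1 h^2$, $|E_1|\le C_2 h^3$ and $|E_2|\le C_3 h^3$. Inserting $n=2$ into the displayed bound, the coupled term involves $|E_0|=0$, $|E_1|$ and $|E_2|$, each $\mathcal{O}(h^3)$; the half-step contribution obeys $h|E_{1/2}|\le C_1 h^3$ thanks to its extra power of $h$; and the residual term is $\mathcal{O}(h^3)$ by construction. Hence $|E_3|\le Ch^3$, anchoring the induction.

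For the inductive step I would assume $|E_j|\le Ch^3$ for all $1\le j\le n$ (with $E_0=0$). Substituting into the one-step bound, the three coupled errors contribute at most $7(\kappa(\alp))^2\exp[4\beta(\kappa(\alp)+1)T]\,Ch^3$, the half-step term contributes $2\beta(\kappa(\alp)+1)C_1\exp[\cdots]\,h^3$, and the explicit term is $\mathcal{O}(h^3)$, so that $|E_{n+1}|\le C'h^3$ with $C'$ independent of $h$. This is the routine part of the argument.

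The hard part will be ensuring that the constant does \emph{not} deteriorate as $n$ advances, since the factor $(\kappa(\alp))^2$ multiplying the three previous errors carries no compensating power of $h$; a naive pass through the recursion inflates the constant like $\bigl(7(\kappa(\alp))^2\exp[4\beta(\kappa(\alp)+1)T]\bigr)^n$, which is useless as $n\to N\sim T/h$. To close the induction uniformly I would instead track the monotone quantity $\mathcal{E}_n=\max_{0\le j\le n}|E_j|$ and derive the scalar recursion $\mathcal{E}_{n+1}\le q\,\mathcal{E}_n+\mathcal{O}(h^3)$ with amplification factor $q=7(\kappa(\alp))^2\exp[4\beta(\kappa(\alp)+1)T]$. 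Indeed the homogeneous recursion $x_{n+1}=(\kappa(\alp))^2(x_{n-2}+3x_{n-1}+3x_n)\exp[\cdots]$ has dominant root bounded by $1$ precisely when $q\le1$; under this (mild) smallness requirement on $L$ and $T$, implicit in $h\ll1$, the fixed-point bound $\mathcal{E}_n\le \mathcal{O}(h^3)/(1-q)$ holds for every $n$, giving $|E_{n+1}|\le Ch^3$ with a single constant $C$. I would make this contraction condition the focal point of the write-up, as it is what genuinely separates a uniform error bound from a merely formal order count.
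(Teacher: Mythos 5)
Your overall route --- strong induction driven by Theorem \ref{thm_q_glo}, with the half-step error entering only through $h|E_{1/2}|$ --- is exactly the paper's route: the paper's entire proof is the one-line remark ``applying the mathematical induction on $n$ in Theorem \ref{thm_q_glo}.'' So your diagnosis that the naive induction inflates the constant like $q^n$ with $q=7(\kappa(\alp))^2\exp[4\beta(\kappa(\alp)+1)T]$ is a genuine observation about a step the paper silently glosses over. The problem is with your repair. First, your claim that the contraction condition $q\le1$ is ``implicit in $h\ll1$'' is false: $q$ contains no $h$ whatsoever --- it depends only on $L$, $\alp$, $M(\alp)$ and $T$ --- so no refinement of the grid makes it hold. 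As written, your argument proves a different theorem, one carrying an added smallness hypothesis on $L$, $\alp$ and $T$ that appears neither in Theorem \ref{thm_q_glo} nor in Theorem \ref{thm_q_err}. Second, even granting such a hypothesis, you need $q<1$ strictly; at $q=1$ your fixed-point bound $\mathcal{O}(h^3)/(1-q)$ degenerates, and the recursion $\mathcal{E}_{n+1}\le\mathcal{E}_n+\mathcal{O}(h^3)$ only yields $\mathcal{E}_N=\mathcal{O}(Nh^3)=\mathcal{O}(h^2)$, one order short.

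There is also a structural inefficiency that makes your condition far more restrictive than necessary: you iterate the \emph{post}-Gronwall bound, so the factor $\exp[4\beta(\kappa(\alp)+1)T]$, which already accounts for accumulation over the whole interval, gets compounded again inside $q$ and makes the contraction condition $T$-dependent (hopeless for moderate $T$). The workable absorption happens \emph{before} Gronwall: from the inequality inside the proof of Theorem \ref{thm_q_glo}, with $\mathcal{E}_n=\max_{0\le j\le n}|E_j|$,
\begin{equation*}
|E_{n+1}|\le 7\bigl(\kappa(\alp)\bigr)^2\mathcal{E}_n+C'h\sum_{j=0}^{n}|E_j|+C''h|E_{1/2}|+Ch^3,
\end{equation*}
one absorbs the first term under the $T$-independent condition $7(\kappa(\alp))^2<1$, divides by $1-7(\kappa(\alp))^2$, and only then applies Lemma \ref{lem_Gronwall} (with $\gamma=1$) to the summed term; the exponential in $T$ then enters harmlessly through Gronwall rather than through the amplification factor. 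Note, however, that even this cleaner version still requires $7(\kappa(\alp))^2<1$, a hypothesis absent from the theorem's statement --- which shows that the theorem as stated cannot be recovered by iterating Theorem \ref{thm_q_glo} at all, and that the paper's one-line induction suffers precisely the gap you identified, without the paper acknowledging it.
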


If we apply the quadratic interpolation of $f(t,y(t))$ with grid points $t_0,t_{1/2}$ and $t_1$,
it is easy to see that the value of $f(t_2,y(t_2))$ can be evaluated by
\begin{equation*}\label{appf_t12}
	f(t_2,y(t_2))=3f(t_0,y(t_0))-8f(t_{1/2},y(t_{1/2}))+6f(t_1,y(t_1))+\frac{f'''(\eta,y(\eta))}{2} h^3,\quad \eta \in(t_0,t_1).
\end{equation*}
It implies
\begin{equation*}\label{appg2}
	|g(t_{2}, y(t_{2}))-g(t_{2}, y^p_{2})|
	\le \frac{L(1-\alp)}{|M(\alp)|}(3|E_0|+8|E_{1/2}|+6|E_1|)+\mathcal{O}(h^3).
\end{equation*}
Then we have the following estimates
\begin{equation}\label{err2}
	\begin{split}
	|E^p_2| &\le  2 \beta h |E_{1/2}|+\frac{2L(1-\alp)}{|M(\alp)|}  (8|E_{1/2}|+6|E_1 | ) )
	+4\beta h |E_{1}|+\mathcal{O}(h^3),\\
	 |E_{2}|
	&\le \left( \frac{2L(1-\alp)}{M(\alp)} \right)^2( 8|E_{1/2}|+6|E_1|)
	+2 \beta \left( \frac{2L(1-\alp)}{|M(\alp)|} +1\right) h |E_{1/2}|\\
		&+ 4\beta h\left( \frac{2L(1-\alp)}{|M(\alp)|} +1\right)  |E_1| +Ch^3.
 	\end{split}
\end{equation}
Using (\ref{err2}) combined with (\ref{q_enp1}), we have the following global error analysis
\begin{thm}\label{thm_q_err2}(Global Error)
If $|E_{1/2}| \le C_1 h^3$, and $|E_1| \le C_2h^3$, then 
\begin{equation}\label{glb-err-quad}
	 |E_{n+1}| \le C h^3.
\end{equation}
\end{thm}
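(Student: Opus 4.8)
The plan is to reduce Theorem \ref{thm_q_err2} to the already-established Theorem \ref{thm_q_err} by supplying the single missing ingredient, namely a third-order bound on the starting error $E_2$. Comparing the two sets of hypotheses, Theorem \ref{thm_q_err} assumes $|E_{1/2}|\le C_1 h^2$, $|E_1|\le C_2 h^3$ \emph{and} $|E_2|\le C_3 h^3$, whereas here we are handed only the two bounds $|E_{1/2}|\le C_1 h^3$ and $|E_1|\le C_2 h^3$. Since for $h\le 1$ we have $C_1 h^3\le C_1 h^2$, the first two hypotheses of Theorem \ref{thm_q_err} are immediate, and the whole content of the argument is to manufacture the estimate $|E_2|\le C h^3$ from the startup data.

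First I would invoke the startup estimate (\ref{err2}), which already expresses $|E_2|$ in terms of $|E_{1/2}|$ and $|E_1|$. Substituting the hypotheses $|E_{1/2}|\le C_1 h^3$ and $|E_1|\le C_2 h^3$ into (\ref{err2}), the leading term $\left(\kappa(\alp)\right)^2(8|E_{1/2}|+6|E_1|)$ is $\mathcal{O}(h^3)$ precisely because both starting errors are already $\mathcal{O}(h^3)$; the two remaining contributions $2\beta(\kappa(\alp)+1)h|E_{1/2}|$ and $4\beta(\kappa(\alp)+1)h|E_1|$ each carry an extra factor $h$ and are therefore $\mathcal{O}(h^4)$; and the intrinsic truncation term is $Ch^3$ by construction. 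Collecting these, I obtain $|E_2|\le C h^3$ for a suitable generic constant $C$, where $\kappa(\alp)=\frac{2L(1-\alp)}{|M(\alp)|}$ as in the proof of Theorem \ref{thm_q_glo}.

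With $|E_{1/2}|\le C_1 h^3\le C_1 h^2$, $|E_1|\le C_2 h^3$, and the freshly derived $|E_2|\le C_3 h^3$ all in hand, every hypothesis of Theorem \ref{thm_q_err} is met, so its conclusion $|E_{n+1}|\le C h^3$ follows at once, completing the proof. Underlying Theorem \ref{thm_q_err} is the three-term recurrence (\ref{q_enp1}) together with the Gronwall estimate of Theorem \ref{thm_q_glo}, applied by induction on $n$ with $E_{1/2},E_1,E_2$ as the base; the strengthened $h^3$ data simply feed that machinery.

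The one point requiring care, and the reason the hypothesis on $E_{1/2}$ is sharpened from $h^2$ (in Theorem \ref{thm_q_err}) to $h^3$ here, is the coefficient $\left(\kappa(\alp)\right)^2$ multiplying $|E_{1/2}|$ in (\ref{err2}). Inside the main induction the half-step error always appears multiplied by an extra $h$ (see the term $2\beta(\kappa(\alp)+1)h|E_{1/2}|$ in Theorem \ref{thm_q_glo}), so a second-order bound there is enough; but in the direct evaluation of $E_2$ the contribution $\left(\kappa(\alp)\right)^2\cdot 8|E_{1/2}|$ is \emph{not} damped by an extra $h$, so only a genuinely third-order bound on $E_{1/2}$ prevents $E_2$ from dropping to order $h^2$. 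Checking that this single undamped term does not degrade the accuracy is the crux; everything else is a routine substitution into (\ref{err2}) followed by an appeal to Theorem \ref{thm_q_err}.
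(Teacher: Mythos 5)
Your proposal is correct and takes essentially the same route as the paper, whose entire proof is the remark that the startup estimate (\ref{err2}) combined with the recurrence (\ref{q_enp1}) yields the theorem: you simply make explicit what the paper leaves implicit, namely that substituting $|E_{1/2}|\le C_1h^3$ and $|E_1|\le C_2h^3$ into (\ref{err2}) gives $|E_2|\le Ch^3$, after which the induction machinery of Theorem \ref{thm_q_err} applies verbatim. Your closing observation---that the undamped $\left(\kappa(\alp)\right)^2|E_{1/2}|$ term in (\ref{err2}), unlike the $h$-damped $|E_{1/2}|$ term in the main recursion, is what forces the sharpening of the half-step hypothesis from $h^2$ to $h^3$---is precisely the point of the theorem's statement.
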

\section{Fast Algorithm for Computation of Memory}\label{sec_fast}
In this section, we describe how the efficiency of the proposed predictor-corrector schemes can be improved. First, we rewrite the exact solution $y(t)$ at $t=t_{n+1}$ in  (\ref{model}) as
\begin{equation}\label{re_y}
	y(t_{n+1})=g(t_{n+1},y(t_{n+1}))+\beta \int_{t_0}^{t_{n}} y(s) \exp\left[ -\beta(t_{n+1}-s)\right]ds+\beta \int_{t_n}^{t_{n+1}} y(s) \exp\left[ -\beta(t_{n+1}-s)\right]ds.
\end{equation}
Here, the second term in the right hand side of $(\ref{re_y})$ is called a memory part and the last term a local part, respectively.
For the predictor-corrector scheme with a linear interpolation in (\ref{pecelin}), we have the approximations of the memory and local parts as follows;
\begin{equation*}
	\text{Memory} \approx \beta \left\{\sum_{j=0}^{n-1}(B^{1,j}_{n+1}y_{j}+B^{2,j}_{n+1} y_{j+1})\right\},\quad
	\text{Local} \approx \beta B^{1,n}_{n+1}y_{n}.
\end{equation*}
It is worth noting that the computational complexity required to approximate the memory term is extremely increasing as $n$ is increasing.
In what follow, we propose a recurrence relation that reduces the number of arithmetic operations required in the algorithms of the meory term.
To do that, let us define
\begin{equation*}
	Y_{mem}(t_{k})=\int_{t_0}^{t_{k-1}} y(s) \exp\left[ -\beta(t_{k}-s)\right]ds.
\end{equation*}
Then we have
\begin{align*}
	Y_{mem}(t_{n+1})
	&= \int_{t_0}^{t_{n}} y(s) \exp\left[ -\beta(t_{n+1}-s)\right]ds\\ 
	 =&\int_{t_0}^{t_{n-1}} y(s) \exp\left[ -\beta(t_{n}+h-s)\right]ds+\int_{t_{n-1}}^{t_{n}} y(s) \exp\left[ -\beta(t_{n+1}-s)\right]ds\\
	 =& \exp\left[-\beta h\right] \int_{t_0}^{t_{n-1}} y(s) \exp\left[ -\beta(t_{n}-s)\right]ds+\int_{t_{n-1}}^{t_{n}} y(s) \exp\left[ -\beta(t_{n+1}-s)\right]ds,
\end{align*}
which  gives the following recurrence relation for $Y_{mem}(t_{n+1})$
\begin{equation}\label{recurrence}
	Y_{mem}(t_{n+1})=\exp\left[-\beta h\right]Y_{mem}(t_{n})+\int_{t_{n-1}}^{t_{n}} y(s) \exp\left[ -\beta(t_{n+1}-s)\right]ds.
\end{equation}
Using the linear interpolation of $y(t)$ on $[t_{n-1},t_n]$, the approximated value $Y_{mem,n+1}$ can be evaluated by
\begin{equation*}
	Y_{mem,n+1}=\exp\left[-\beta h\right]Y_{mem,n}+B^{1,n-1}_{n+1} y_{n-1}+B^{2,n-1}_{n+1} y_{n}.
\end{equation*}
Then we have the following fast linear predictor-corrector scheme
\begin{equation}\label{fast_pecelin}
\begin{split}
	\left[ 1-\beta B^{2,n}_{n+1}\right] y_{n+1}&=g(t_{n+1}, y^p_{n+1})
		+\beta \left[ Y_{mem,n+1}+B^{1,n}_{n+1}y_{n}\right] \\
	\left[ 1-\beta B^{2,n}_{n+1}\right] y^p_{n+1}&=g_{n+1}
		+\beta \left[ Y_{mem,n+1}+B^{1,n}_{n+1}y_{n}\right].
\end{split}
\end{equation}

Employing the quadratic interpolation of $y(t)$ on $[t_{n-1},t_n]$, we can obtain the approximated value $Y_{mem,n+1}$ and the following fast
quadratic predictor-corrector scheme is proposed;
\begin{equation}\label{fast_pecequad}
\begin{split} 
	\left[ 1-\beta A^{2,n}_{n+1}\right] y_{n+1}
	&=g(t_{n+1}, y^p_{n+1})
		+\beta \left[ Y_{mem,n+1}+A^{0,n}_{n+1}y_{n-1}+A^{1,n}_{n+1} y_{n}\right]  \\  
	\left[ 1-\beta A^{2,n}_{n+1}\right] y^p_{n+1}
	&=g_{n+1}+\beta \left[ Y_{mem,n+1} +A^{0,n}_{n+1}y_{n-1}+A^{1,n}_{n+1} y_{n}\right] .
\end{split}
\end{equation}

\begin{rem}
The proposed fast schemes $(\ref{fast_pecelin})$ and $(\ref{fast_pecequad})$ do not contain any summation operations while the standard algorithms $(\ref{pecelin})$ and $(\ref{pecequad})$ have. Considering that the summation operation requires approximately $\mathcal{O}{(n^2)}$ arithmetic operations for the given step $n$, the proposed fast algorithms can reduce the required arithmetic operation to $\mathcal{O}{(n)}$. This will be confirmed by numerical examples in the following section.
\end{rem}

\section{Numerical Results}\label{sec_num}
In this section, four numerical examples are introduced. In the examples, we demonstrate the performance of the proposed methods. The global error estimates \eqref{glb-err-linear} and \eqref{glb-err-quad} are justified by observing the computed convergence rates from the examples. The effectiveness of the fast algorithm is also confirmed by the examples. \\
we denote numerical methods that we suggested as follows:
\begin{enumerate}[(1)]
	\item C-PC-L : Predictor-Corrector method with linear interpolation
	\item C-PC-Q : Predictor-Corrector method with quadratic interpolation
	\item F-PC-L : Fast Predictor-Corrector method with linear interpolation
	\item F-PC-Q : Fast Predictor-Corrector method with quadratic interpolation
\end{enumerate} 
\subsection{Examples of Nonlinear Fractional Differential Equations}
To measure the error of approximate solution, we use the following error estimates:
\begin{itemize}
	\item \emph{Maximum error over $[0,T]$:}
	\begin{align*}
	E_{\max}=\max_{0\leq n\leq N}|E_n|. \quad \text{i.e.} \ \max_{0\leq n\leq N}|y(t_n)-y_{n}|.
	\end{align*}
	\item  \emph{$L^2$ error over $[0,T]$:}
	\begin{align*}
	E_{L^2}=\left(h\sum_{n=0}^N|E_n|^2\right)^{1/2}, \quad \text{i.e.} \ \left(h\sum_{n=0}^N|y(t_n)-y_n|^2\right)^{1/2}.
	\end{align*}
\end{itemize}

\begin{example}\label{ex2}
	\begin{align*}
	D^{\alp}_{0}y(t)=&\begin{dcases}
	\frac{-M(\alp)}{\beta(\beta-1)(\alp-1)}\left((e^{-\beta t}-1)-\beta(e^{-t}-1)\right)+y^2-(e^{-t}-1+t)^2,&\text{if }\alp\in(0,1)\setminus\{0.5\}\\
	-2M(\alp)(e^{-t}-1+te^{-t}),&\text{if }\alp=0.5,
	\end{dcases}\\
	y(0)=&0.
	\end{align*}
	where $M(\alp)=1$, the exact solution is $y(t)=e^{-t}-1+t$, and $T=1$.
\end{example}

\begin{example}\label{ex3}
	\begin{align*}
	D^{\alp}_{0}y(t)=&\frac{M(\alp)}{(\beta^2+1)^2(\alp-1)}(\beta^3(e^{-\beta  t}-\cos{t}+t\sin{t})-\beta^2(2\sin{t}+t\cos{t})\\
	&\hspace{75pt}-t\cos{t}+\beta(\cos{t}-e^{-\beta t}+t\sin{t}))+y^2-t^2\cos^2{t}, \quad t\in[0,1]\\
	y(0)=&0.
	\end{align*}
	Here $M(\alp)=1$ and the exact solution is $y(t)=t\cos{t}$.
\end{example}

In Example \ref{ex2} and \ref{ex3}, since $f(t,y)\in\mathcal{C}^{\infty}([0,1])$, the convergence rate against the size of the grid $h$ for the linear interpolation and the quadratic interpolation are quadratic and cubic, respectively. From Table \ref{TABLE-ex2}, \ref{TABLE-ex3}, \ref{TABLE-CF_time} and Figure \ref{FIGURE-CF_time} (a) and (b), we observe the following:
\begin{enumerate}
	\item Maximum errors, $L^2$ errors, and rates of convergence against $h$ of approximate solutions obtained by predictor-corrector methods and fast predictor-corrector methods are shown in Table \ref{TABLE-ex2}. As it shown in Table \ref{TABLE-ex2}, errors obtained by C-PC-L and F-PC-L, C-PC-Q and F-PC-Q are exactly identical, respectively. Because the memory term in the predictor-corrector methods is only replaced by the recurrence relation \ref{recurrence} in the fast predictor-corrector algorithm. Table \ref{TABLE-ex3} illustrates maximum errors, $L^2$ errors, and rates of convergence versus $h$ of approximate solutions obtained by the F-PC-L and F-PC-Q. Both computed convergence profiles for $E_{\max}$ and $E_{L^2}$ are around 2 in case of linear interpolation and 3 in case of quadratic interpolation, respectively in Table \ref{TABLE-ex2} and \ref{TABLE-ex3}.
	\item In Table \ref{TABLE-ex2}, \ref{TABLE-ex3}, we can see that the computed convergence profiles of the maximum errors support the theoretical convergence rates for $\alpha=0.2,0.5,0.8$.
	\item Table \ref{TABLE-CF_time} shows CPU times in second executed the proposed methods with quadratic interpolation. We can see that the computational cost is remarkably reduced as the fast algorithm is implemented, comparing with the computational cost of the predictor-corrector scheme.
	\item The plots exhibited in Figure \ref{FIGURE-CF_time} display CPU time versus the number of grid points $N=1/h$ in Table \ref{TABLE-CF_time} with rates of CPU time increasing. The rate of CPU time executed C-PC-Q is around 2 for both Example \ref{ex2} and \ref{ex3} while the rate of CPU time executed F-PC-Q is around 1 for both examples. It means that the CPU time executed C-PC-Q is increased exponentionally while the CPU time executed F-PC-Q is increased linearlly. Thus the effectiveness of the fast algorithm is verified. The difference between CPU time executed C-PC-Q and F-PC-Q is distinctly appeared in examples of time-fractional partial differential equations \ref{ex4} and \ref{ex5} and will be addressed in the subsequent subsection.
\end{enumerate}

\begin{table}[H]
	\centering
	\begin{tabular}{l|llllV{2}llll}
		\toprule
		\toprule
		& \multicolumn{4}{cV{2}}{C-PC-L}   &   \multicolumn{4}{c}{F-PC-L}       \\
		\cline{2-9}
		\cline{2-9}
		& $E_{\max}$          & roc   & $E_{L^2}$ & roc         & $E_{\max}$ & roc   & $E_{L^2}$ & roc   \\
		\hline
		$h$& \multicolumn{8}{c}{$\alpha=0.2$}       \\
		\hline
		1/10  & 1.96E-03 & - & 7.95E-04 & - & 1.96E-03 & - & 7.95E-04 & - \\
		1/20  & 4.85E-04 & 2.01 & 1.76E-04 & 2.18 & 4.85E-04 & 2.01 & 1.76E-04 & 2.18 \\
		1/40  & 1.20E-04 & 2.02 & 4.09E-05 & 2.11 & 1.20E-04 & 2.02 & 4.09E-05 & 2.11 \\
		1/80  & 2.97E-05 & 2.01 & 9.80E-06 & 2.06 & 2.97E-05 & 2.01 & 9.80E-06 & 2.06 \\
		1/160 & 7.37E-06 & 2.01 & 2.40E-06 & 2.03 & 7.37E-06 & 2.01 & 2.40E-06 & 2.03 \\
		1/320 & 1.84E-06 & 2.01 & 5.92E-07 & 2.02 & 1.84E-06 & 2.01 & 5.92E-07 & 2.02 \\
		\hline
		$h$ & \multicolumn{8}{c}{$\alpha=0.5$}       \\
		\hline
		1/10  & 5.19E-04 & - & 3.54E-04 & - & 5.19E-04 & - & 3.54E-04 & - \\
		1/20  & 1.31E-04 & 1.98 & 8.74E-05 & 2.02 & 1.31E-04 & 1.98 & 8.74E-05 & 2.02 \\
		1/40  & 3.29E-05 & 2.00 & 2.16E-05 & 2.01 & 3.29E-05 & 2.00 & 2.16E-05 & 2.01 \\
		1/80  & 8.23E-06 & 2.00 & 5.38E-06 & 2.01 & 8.23E-06 & 2.00 & 5.38E-06 & 2.01 \\
		1/160 & 2.06E-06 & 2.00 & 1.34E-06 & 2.00 & 2.06E-06 & 2.00 & 1.34E-06 & 2.00 \\
		1/320 & 5.14E-07 & 2.00 & 3.34E-07 & 2.00 & 5.14E-07 & 2.00 & 3.34E-07 & 2.00\\
		\hline
		$h$ & \multicolumn{8}{c}{$\alpha=0.8$}       \\
		\hline
		1/10  & 2.58E-03 & - & 1.57E-03 & - & 2.58E-03 & - & 1.57E-03 & - \\
		1/20  & 6.82E-04 & 1.92 & 4.01E-04 & 1.97 & 6.82E-04 & 1.92 & 4.01E-04 & 1.97 \\
		1/40  & 1.73E-04 & 1.98 & 1.00E-04 & 2.00 & 1.73E-04 & 1.98 & 1.00E-04 & 2.00 \\
		1/80  & 4.36E-05 & 1.99 & 2.50E-05 & 2.01 & 4.36E-05 & 1.99 & 2.50E-05 & 2.01 \\
		1/160 & 1.09E-05 & 2.00 & 6.22E-06 & 2.00 & 1.09E-05 & 2.00 & 6.22E-06 & 2.00 \\
		1/320 & 2.73E-06 & 2.00 & 1.55E-06 & 2.00 & 2.73E-06 & 2.00 & 1.55E-06 & 2.00 \\
		\toprule
		\toprule
		& \multicolumn{4}{cV{2}}{C-PC-Q}   &   \multicolumn{4}{c}{F-PC-Q}       \\
		\cline{2-9}
		& $E_{\max}$          & roc   & $E_{L^2}$ & roc         & $E_{\max}$ & roc   & $E_{L^2}$ & roc   \\
		\hline
		$h$& \multicolumn{8}{c}{$\alpha=0.2$}       \\
		\hline
		1/10  & 5.34E-04 & - & 2.68E-04 & - & 5.34E-04 & - & 2.68E-04 & - \\
		1/20  & 6.23E-05 & 3.10 & 2.92E-05 & 3.20 & 6.23E-05 & 3.10 & 2.92E-05 & 3.20 \\
		1/40  & 7.48E-06 & 3.06 & 3.39E-06 & 3.11 & 7.48E-06 & 3.06 & 3.39E-06 & 3.11 \\
		1/80  & 9.15E-07 & 3.03 & 4.08E-07 & 3.06 & 9.15E-07 & 3.03 & 4.08E-07 & 3.06 \\
		1/160 & 1.13E-07 & 3.01 & 5.00E-08 & 3.03 & 1.13E-07 & 3.01 & 5.00E-08 & 3.03 \\
		1/320 & 1.40E-08 & 3.02 & 3.57E-09 & 3.81 & 1.40E-08 & 3.02 & 3.57E-09 & 3.81 \\
		\hline
		$h$ & \multicolumn{8}{c}{$\alpha=0.5$}       \\
		\hline
		1/10  & 3.78E-04 & - & 2.10E-04 & - & 3.78E-04 & - & 2.10E-04 & - \\
		1/20  & 3.99E-05 & 3.24 & 2.13E-05 & 3.30 & 3.99E-05 & 3.24 & 2.13E-05 & 3.30 \\
		1/40  & 4.55E-06 & 3.13 & 2.38E-06 & 3.16 & 4.55E-06 & 3.13 & 2.38E-06 & 3.16 \\
		1/80  & 5.42E-07 & 3.07 & 2.80E-07 & 3.08 & 5.42E-07 & 3.07 & 2.80E-07 & 3.08 \\
		1/160 & 6.61E-08 & 3.04 & 3.40E-08 & 3.04 & 6.61E-08 & 3.04 & 3.40E-08 & 3.04 \\
		1/320 & 7.82E-09 & 3.08 & 4.08E-09 & 3.06 & 7.82E-09 & 3.08 & 4.08E-09 & 3.06\\
		\hline
		$h$ & \multicolumn{8}{c}{$\alpha=0.8$}       \\
		\hline
		1/10  & 7.08E-05 & - & 5.52E-05 & - & 7.07E-05 & - & 5.51E-05 & - \\
		1/20  & 2.88E-06 & 4.62 & 1.91E-06 & 4.85 & 2.88E-06 & 4.62 & 1.91E-06 & 4.85 \\
		1/40  & 8.25E-07 & 1.80 & 2.92E-07 & 2.71 & 8.25E-07 & 1.80 & 2.92E-07 & 2.71 \\
		1/80  & 1.31E-07 & 2.66 & 4.98E-08 & 2.55 & 1.31E-07 & 2.66 & 4.98E-08 & 2.55 \\
		1/160 & 1.80E-08 & 2.86 & 7.08E-09 & 2.81 & 1.80E-08 & 2.86 & 7.08E-09 & 2.81 \\
		1/320 & 2.28E-09 & 2.98 & 9.15E-10 & 2.95 & 2.28E-09 & 2.98 & 9.15E-10 & 2.95 \\
		\bottomrule
		\bottomrule
	\end{tabular}
	\caption{Maximum errors and $L^2$ errors with computed rates of convergence obtained by the proposed predictor-corrector methods and fast algorithms in Example \ref{ex2} with $\alpha=0.2, 0.5, 0.8$}
	\label{TABLE-ex2}
\end{table}

\begin{table}[H]
	\centering
	\begin{tabular}{l|llllV{2}llll}
		\toprule
		\toprule
		& \multicolumn{4}{cV{2}}{F-PC-L}   &   \multicolumn{4}{c}{F-PC-Q}       \\
		\cline{2-9}
		\cline{2-9}
		& $E_{\max}$          & roc   & $E_{L^2}$ & roc         & $E_{\max}$ & roc   & $E_{L^2}$ & roc   \\
		\hline
		$h$& \multicolumn{8}{c}{$\alpha=0.2$} \\
		\hline
		1/10  & 2.80E-01 & -    & 1.03E-01 & -    & 2.32E-02 & -    & 1.06E-02 & - \\
		1/20  & 7.12E-02 & 1.97 & 2.55E-02 & 2.02 & 2.07E-03 & 3.49 & 1.04E-03 & 3.34 \\
		1/40  & 1.23E-02 & 2.53 & 5.34E-03 & 2.25 & 1.96E-04 & 3.40 & 1.04E-04 & 3.33 \\
		1/80  & 2.59E-03 & 2.25 & 1.19E-03 & 2.16 & 2.24E-05 & 3.13 & 1.22E-05 & 3.09 \\
		1/160 & 6.21E-04 & 2.06 & 2.89E-04 & 2.04 & 2.72E-06 & 3.04 & 1.48E-06 & 3.04 \\
		1/320 & 1.54E-04 & 2.01 & 7.17E-05 & 2.01 & 1.61E-07 & 4.08 & 6.33E-08 & 4.54 \\
		\hline
		$h$ & \multicolumn{8}{c}{$\alpha=0.5$}       \\
		\hline
		1/10  & 1.90E-02 & -    & 9.37E-03 & -    & 1.00E-03 & -    & 7.43E-04 & - \\
		1/20  & 4.55E-03 & 2.06 & 2.23E-03 & 2.07 & 9.72E-05 & 3.37 & 6.90E-05 & 3.43 \\
		1/40  & 1.13E-03 & 2.01 & 5.47E-04 & 2.03 & 1.04E-05 & 3.22 & 7.22E-06 & 3.26 \\
		1/80  & 2.82E-04 & 2.00 & 1.36E-04 & 2.01 & 1.20E-06 & 3.12 & 8.24E-07 & 3.13 \\
		1/160 & 7.08E-05 & 2.00 & 3.39E-05 & 2.00 & 1.44E-07 & 3.06 & 9.85E-08 & 3.06 \\
		1/320 & 1.77E-05 & 2.00 & 8.46E-06 & 2.00 & 1.73E-08 & 3.06 & 1.19E-08 & 3.05\\
		\hline
		$h$ & \multicolumn{8}{c}{$\alpha=0.8$}       \\
		\hline
		1/10  & 4.03E-03 & -    & 2.13E-03 & -    & 5.14E-03 & -    & 3.06E-03 & - \\
		1/20  & 9.80E-04 & 2.04 & 4.74E-04 & 2.17 & 5.95E-04 & 3.11 & 3.42E-04 & 3.16 \\
		1/40  & 2.36E-04 & 2.05 & 1.08E-04 & 2.14 & 6.94E-05 & 3.10 & 3.90E-05 & 3.13 \\
		1/80  & 5.74E-05 & 2.04 & 2.53E-05 & 2.09 & 8.31E-06 & 3.06 & 4.61E-06 & 3.08 \\
		1/160 & 1.41E-05 & 2.02 & 6.11E-06 & 2.05 & 1.01E-06 & 3.03 & 5.59E-07 & 3.04 \\
		1/320 & 3.50E-06 & 2.01 & 1.50E-06 & 2.03 & 1.25E-07 & 3.02 & 6.86E-08 & 3.03 \\
		\bottomrule
		\bottomrule
	\end{tabular}
	\caption{Maximum errors and $L^2$ errors with computed rates of convergence obtained by the proposed fast predictor-corrector methods with $\alpha = 0.2,0.5,0.8$ in Example \ref{ex3}}
\label{TABLE-ex3}
\end{table}

\begin{table}[H]
	\centering
	\begin{tabular}{c|cc|cc}
		\toprule
		\toprule
		& \multicolumn{2}{c|}{Example \ref{ex2}} & \multicolumn{2}{c}{Example \ref{ex3}} \\
		\midrule
		$h=(10\cdot 2^k)^{-1}$ & \multicolumn{4}{c}{CPU time} \\
		\midrule
		$k$ & C-PC-Q & F-PC-Q & C-PC-Q & F-PC-Q \\
		\midrule
		0	&	6.88E-03	&	6.22E-03	&	7.01E-03	&	4.78E-03
\\
		1	&	9.09E-03	&	8.26E-03	&	9.08E-03	&	6.14E-03
\\
		2	&	1.87E-03	&	4.29E-04	&	2.06E-03	&	1.58E-04
\\
		3	&	1.99E-03	&	2.70E-04	&	4.15E-03	&	2.60E-04
\\
		4	&	8.02E-03	&	4.43E-04	&	1.59E-02	&	5.69E-04
\\
		5	&	2.67E-02	&	6.90E-04	&	3.23E-02	&	1.01E-03 \\
		6	&	1.11E-01	&	1.07E-03	&	1.19E-01	&	1.28E-03	\\
		7	&	4.13E-01	&	2.10E-03	&	4.33E-01	&	2.05E-03	\\
		8	&	2.27E+00	&	3.87E-03	&	1.81E+00	&	3.92E-03	\\
		9	&	7.92E+00	&	1.15E-02	&	7.07E+00	&	7.73E-03	\\
		10	&	2.74E+01	&	1.79E-02	&	3.35E+01	&	1.65E-02	\\
		11	&	1.25E+02	&	3.11E-02	&	1.25E+02	&	2.99E-02	\\
		12	&	4.65E+02	&	5.72E-02	&	4.68E+02	&	5.82E-02	\\
		13	&	1.90E+03	&	1.11E-01	&	1.95E+03	&	1.18E-01	\\
		14	&	7.50E+03	&	2.60E-01	&	7.60E+03	&	2.23E-01	\\
		15	&	2.63E+04	&	4.66E-01	&	2.76E+04	&	5.01E-01	\\
		\bottomrule
		\bottomrule
	\end{tabular}
\caption{CPU times (second) exectued the predictor-corrector scheme  and fast predictor-corrector scheme with quadratic interpolation of examples \ref{ex2} and \ref{ex3} with $\alpha = 0.5$}
\label{TABLE-CF_time}
\end{table}

\begin{figure}[H]
	\begin{center}
		\subfloat[CPU time (CT) vs. $N$ for example \ref{ex2}]
		{\includegraphics[width=3in]{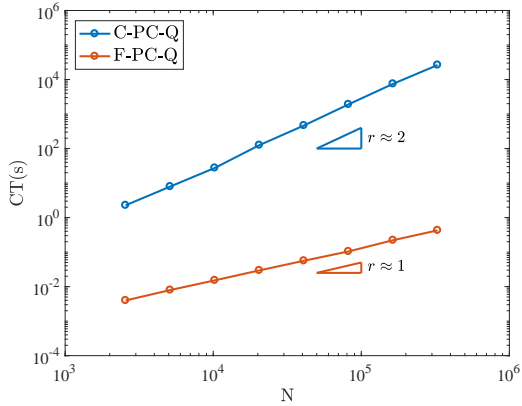}}
		\quad
		\subfloat[CPU time (CT) vs. $N$ for example \ref{ex3}]
		{\includegraphics[width=3in]{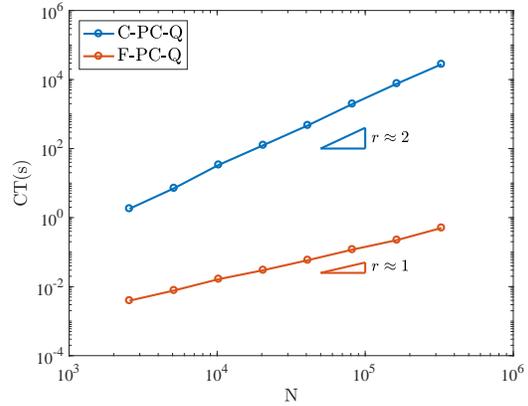}} \\
	\end{center}
	\caption{log-log plot of CPU time(CT) versus $N$ for examples \ref{ex2}, and \ref{ex3} with $\alpha=0.5$}
	\label{FIGURE-CF_time}
\end{figure}


\subsection{Examples of Nonlinear Time-fractional Partial Differential Equations}\label{sec_PDE}
In this subsection, we consider two time-fractional PDEs: time-fraction sub-diffusion and time-fractional advection diffusion equation. For each of diffusion equations, We derive the fast algorithm with quadratic interpolation combined with the central difference scheme for the spatial discretization. 
First, we consider the time-fractional advection diffusion equation with initial-boundary conditions:
\begin{align*}
&{}_0^{CF}D_t^\alpha y(x,t)+\frac{\partial}{\partial x}y(x,t)+\frac{\partial^2}{\partial x^2}y(x,t)=f(x,t,y),
\\ &y(x,0)=y_0(x),~y(a,t)=\psi(t),~y(b,t)=\phi(t),
\end{align*}
where $0<\alpha<1$ and $(x,t)\in[a,b]\times\mathbb{R}^+$.
To solve this problem, we consider the uniform grid
$\Phi_N^M=\{(x_m,t_n)|m=0,1,\cdots,M,~n=0,1,\cdots,N)\}$ with  $\Delta x_m=\tau$ and $\Delta t_n=h$
and $y_m^n$ is denoted by the approximated solution at $(x_m,t_n)$. For the space derivative, we approximate $y_x$ and $y_{xx}$ by the central difference as follows:
\begin{align*}
\frac{y(x+\tau)-y(x-\tau)}{2\tau}=y'(x)+\frac{\tau^2}{6}y^{(3)}(\xi),\quad \frac{y(x+\tau)-2y(x)+y(x-\tau)}{\tau^2}=y''(x)+\frac{\tau^2}{12}y^{(4)}(\xi),
\end{align*}
where $h>0$ and $\xi \in (x-h,x+h)$.
For the time derivative, we employ \eqref{fast_pecequad}. Then, the fast scheme for the advection diffusion equation is given by
\begin{align} 
\left[ 1-\beta A^{2,n}_{n+1}\right] y_{m}^{n+1}&+\frac{1-\alpha}{M(\alpha)}\frac{y_{m+1}^{n+1}-y_{m-1}^{n+1}}{2\tau}+\frac{1-\alpha}{M(\alpha)}\frac{y_{m+1}^{n+1}-2y_{m}^{n+1}+y_{m-1}^{n+1}}{\tau^2}\nonumber\\
&=g_{m}^{n+1,P}+\beta \left[ Y_{mem,m}^{n+1}+A^{0,n}_{n+1}y_{m}^{n-1}+A^{1,n}_{n+1} y_{m}^{n}\right], \label{eq:AD_cor}	
\\  
\left[ 1-\beta A^{2,n}_{n+1}\right] y^{n+1,P}_{m}&+\frac{1-\alpha}{M(\alpha)}\frac{y_{m+1}^{n+1,P}-y_{m-1}^{n+1,P}}{2\tau}+\frac{1-\alpha}{M(\alpha)}\frac{y_{m+1}^{n+1,P}-2y_{m}^{n+1,P}+y_{m-1}^{n+1,P}}{\tau^2}\nonumber\\
&=g_{m}^{n+1}+\beta \left[ Y_{mem,m}^{n+1} +A^{0,n}_{n+1}y_{m}^{n-1}+A^{1,n}_{n+1} y_{m}^{n}\right],\quad \text{for }1\leq m\leq M-1.\label{eq:AD_pre}
\end{align}
Similarly, we can deal with the time-fractional diffusion equation with initial-boundary conditions:
\begin{align*}
&{}_0^{CF}D_t^\alpha y(x,t)+\frac{\partial^2}{\partial x^2}y(x,t)=f(x,t,y),
\\&y(x,0)=y_0(x),~y(a,t)=\psi(t),~y(b,t)=\phi(t),
\end{align*}
where $0<\alpha<1$ and $(x,t)\in[a,b]\times\mathbb{R}^+$. By utilizing \eqref{eq:AD_cor} and \eqref{eq:AD_pre}, we obtain the following fast scheme for the diffusion equation:
\begin{align} 
\left[ 1-\beta A^{2,n}_{n+1}\right] y_{m}^{n+1}&+\frac{1-\alpha}{M(\alpha)}\frac{y_{m+1}^{n+1}-2y_{m}^{n+1}+y_{m-1}^{n+1}}{\tau^2}\nonumber \\
&=g_{m}^{n+1,P}+
\beta \left[ Y_{mem,m}^{n+1}+A^{0,n}_{n+1}y_{m}^{n-1}+A^{1,n}_{n+1} y_{m}^{n}\right], \label{eq:D_cor}
\\  
\left[ 1-\beta A^{2,n}_{n+1}\right] y^{n+1,P}_{m}&+\frac{1-\alpha}{M(\alpha)}\frac{y_{m+1}^{n+1,P}-2y_{m}^{n+1,P}+y_{m-1}^{n+1,P}}{\tau^2}\nonumber \\
&=g_{m}^{n+1}+\beta \left[ Y_{mem,m}^{n+1} +A^{0,n}_{n+1}y_{m}^{n-1}+A^{1,n}_{n+1} y_{m}^{n}\right].\label{eq:D_pre}
\end{align}
Now, we employ the fast predictor-corrector schemes \eqref{eq:AD_cor}-\eqref{eq:AD_pre} and \eqref{eq:D_cor}-\eqref{eq:D_pre} into Example \ref{ex4} and \ref{ex5}, respectively.

\begin{example}\label{ex4}
	\begin{align*}
	&{}_0^{CF}D_t^{\alp} y(x,t)+\frac{\partial}{\partial x}y(x,t)+\frac{\partial^2}{\partial x^2}y(x,t)=f(x,t,y),\\
	&y(x,0)=0,\quad y(0,t)=e^{-t}-1+t+t\sin{\pi t},\quad y(1,t)=-(e^{-t}-1+t+t\sin{\pi t}).
	\end{align*}
	where
	\begin{align*}
	f(x,t,y)&={}_0^{CF}D_t^{\alp}\left[\cos{3\pi x}(e^{-t}-1+t+t\sin{\pi t})\right]-(3\pi\sin{3\pi x}+9\pi^2\cos{3\pi x})(e^{-t}-1+t+t\sin\pi{t})\\
	&+y^2-(\cos{3\pi x}(e^{-t}-1+t+t\sin{\pi t}))^2.
	\end{align*}
	Its exact solution is $y(x,t)=\cos{3\pi x}(e^{-t}-1+t+t\sin{\pi t})$.
\end{example}

\begin{example}\label{ex5}
	\begin{align*}
&{}_0^{CF}D_t^{\alp} y(x,t)+\frac{\partial^2}{\partial x^2}y(x,t)=f(x,t,y),\\
&y(x,0)=x^2(x-1)^2,\quad y(0,t)=0,\quad y(1,t)=0,
\end{align*}
where
\begin{align*}
f(x,t,y)&={}_0^{CF}D_t^{\alp}\left[(1-t^4)x^2(x-1)^2\right]+(1-t^4)(2x^2+2(x-1)^2+4x(2x-2))\\
&+y^2-((1-t^4)x^2(x-1)^2)^2.
\end{align*}
Its exact solution is $y(x,t)=(1-t^4)x^2(x-1)^2$.
\end{example}
For both examples \ref{ex4} and \ref{ex5}, spatial domain is set $[0,1]$ and $T=1$. Errors of approximate solutions are estimated as follows:
\begin{itemize}
	\item \emph{Maximum error over the spatial domain $[a, b]$ at time $t_n$:}
	\begin{align*}
	E^x_{\max}=\max_{0\leq m\leq M}|y(x_m, t_n) - y^{n}_{m}|.
	\end{align*}
	\item \emph{Maximum error over the time interval $[0, T]$ at spatial grid $x_m$:}
	\begin{align*}
	E^t_{\max}=\max_{0\leq n\leq N}|y(x_m, t_n) - y^{n}_{m}|.
	\end{align*}
\end{itemize}
In Table \ref{TABLE-ex4-1}, \ref{TABLE-ex4-2}, \ref{TABLE-ex5-1}, and \ref{TABLE-ex5-2}, we observe the following:
\begin{enumerate}
	\item Table \ref{TABLE-ex4-1} and \ref{TABLE-ex5-1} shows maximum errors $E^x_{\max}$ and $E^t_{\max}$ with convergence rates of approximate solutions obtained by the fast predictor-corrector scheme for the advection diffusion equation \eqref{eq:AD_cor}-\eqref{eq:AD_pre} in Example \ref{ex4} and \eqref{eq:D_cor}-\eqref{eq:D_pre} in Example \ref{ex5} for $\alpha=0.2,0.5,0.8$, respectively.
	\item In Table \ref{TABLE-ex4-1} and \ref{TABLE-ex5-1}, we manipulate the spatial step size $\tau$ depending on the time step size $h$ and vice versa as follows
	\begin{eqnarray*}
		h &=& \tau^{2/3} \text{ as estimating } E^x_{\max},\\
		\tau &=& h^{3/2} \text{ as estimating } E^t_{\max}.
	\end{eqnarray*}
	In order to observe the global order of convergence obtained by our proposed methods then the global order of convergence is 
	\begin{eqnarray}\label{ex4-5-glb-order}
	\begin{aligned}
	\mathcal{O}(\tau^2 + h^3) &\approx \mathcal{O}(\tau^2) \text{ for } E^x_{\max},\\
	\mathcal{O}(\tau^2 + h^3) &\approx \mathcal{O}^(h^3) \text{ for } E^t_{\max}.
	\end{aligned}
	\end{eqnarray}
	 We can see that the computed convergence profiles agree the global orders of convergence \eqref{ex4-5-glb-order} for F-PC-Q so C-PC-Q does.
	\item Maximum errors $E^t_{\max}$, the computed convergence profiles, and CPU time executed C-PC-Q and F-PC-Q against $h$ and $\tau$ are shown in Table \ref{TABLE-ex4-2} and \ref{TABLE-ex5-2} for $\alpha=0.2,0.5,0.8$. To verify the performance of the proposed methods versus $h$ with fixed $\tau$ and vise versa, we fix $\tau$ and $h$ such that 
	\begin{eqnarray}
	\max h^3 > \tau^2 \text{ so that } \mathcal{O}(\tau^2 + h^3) &\approx& \mathcal{O}(h^3) \text{ for } E^t_{\max}, \label{ex4-5-glb-order-fix-tau}\\
	\max \tau^2 > h^3 \text{ so that } \mathcal{O}(\tau^2 + h^3) &\approx& \mathcal{O}(\tau^2) \text{ for } E^x_{\max}, \label{ex4-5-glb-order-fix-h}
	\end{eqnarray}
	respectively.
	\item In Example \ref{ex4}, we set $\tau=1/80000$ as estimating $E^t_{\max}$ with computed rates of convergence and CPU time executed. Since $(1/640)^3 > (1/80000)^2$, we expect the computed global slopes of convergence are approximately 3 by \eqref{ex4-5-glb-order-fix-tau} and Table \ref{TABLE-ex4-2} confirms the rates. On the other hand, we set $h=1/10000$ as estimating $E^x_{\max}$ to observe the performance of the fast predictor-corrector scheme \eqref{eq:AD_cor}-\eqref{eq:AD_pre} against $\tau$ only. Since $(1/640)^2 > (1/10000)^3$, the global order of convergence is 2 by \eqref{ex4-5-glb-order-fix-h} and we can see that the computed rates of convergence versus $\tau$ are nearly 2.
	\item In similar to that of Example \ref{ex4}, we set $\tau=1/40000$ and $h=1/4000$ as estimating $E^t_{\max}$ and $E^x_{\max}$ in Example \ref{ex5}, respectively. Since $(1/640)^3 > (1/40000)^2$ and $(1/640)^2 > (1/4000)^3$, the global orders of convergence are 3 and 2 by \eqref{ex4-5-glb-order-fix-tau} and \eqref{ex4-5-glb-order-fix-h}. Table \ref{TABLE-ex5-2} shows that the computed convergence profiles versus $h$ are nearly 3 except for the case of that $h=1/320, 1/640$, and versus $\tau$ are nearly 2.
	\item We can observe that C-PC-Q and F-PC-Q demonstrate a good performance and accuracy for solving time-fractional diffusion equations through Table \ref{TABLE-ex4-2} and \ref{TABLE-ex5-2}. Moreover, CPU times executed F-PC-Q are remarkably reduced comparing with that of C-PC-Q in Table \ref{TABLE-ex4-2} and \ref{TABLE-ex5-2}. It means that the reduction of the number of arithmetic operations using the recurrence relation \eqref{recurrence} is effective to reduce the computational cost for solving the PDE problems.
\end{enumerate}

\begin{table}[htp]
	\centering
	\resizebox{1\textwidth}{!}{
\begin{tabular}{l|llll|llll|llll}
	\toprule
	\toprule
		     & $E_{\max}^x$ & roc  & $E_{\max}^t$ & roc  & $E_{\max}^x$ & roc  & $E_{\max}^t$ & roc  & $E_{\max}^x$ & roc  & $E_{\max}^t$ & roc  \\
		     \cline{2-13}
	         &  \multicolumn{4}{c|}{$\alpha=0.2$}       & \multicolumn{4}{c|}{$\alpha=0.5$}       & \multicolumn{4}{c}{$\alpha=0.8$}       \\
	         \midrule
	$\tau,h$     & \multicolumn{12}{c}{F-PC-Q}  \\
	\midrule
	1/10         & 1.06.E-01    & - & 1.02.E-02    & - & 1.11.E-01    & - & 1.05.E-02    & - & 1.47.E-01    & - & 1.11.E-02    & - \\
	1/20         & 2.92.E-02    & 1.86 & 1.24.E-03    & 3.03 & 3.00.E-02    & 1.88 & 1.26.E-03    & 3.05 & 3.21.E-02    & 2.19 & 1.30.E-03    & 3.10 \\
	1/40         & 6.31.E-03    & 2.21 & 1.49.E-04    & 3.06 & 6.42.E-03    & 2.22 & 1.51.E-04    & 3.06 & 6.60.E-03    & 2.28 & 1.54.E-04    & 3.07 \\
	1/80         & 1.52.E-03    & 2.05 & 1.83.E-05    & 3.03 & 1.55.E-03    & 2.05 & 1.85.E-05    & 3.03 & 1.57.E-03    & 2.07 & 1.88.E-05    & 3.03 \\
	1/160        & 3.85.E-04    & 1.99 & 2.26.E-06    & 3.01 & 3.90.E-04    & 1.99 & 2.29.E-06    & 3.01 & 3.98.E-04    & 1.98 & 2.32.E-06    & 3.02 \\
	1/320        & 9.25.E-05    & 2.06 & 2.82.E-07    & 3.01 & 9.36.E-05    & 2.06 & 2.85.E-07    & 3.01 & 9.51.E-05    & 2.07 & 2.92.E-07    & 2.99 \\
	1/640        & 2.30.E-05    & 2.01 & 3.92.E-08    & 2.85 & 2.33.E-05    & 2.01 & 3.55.E-08    & 3.00 & 2.36.E-05    & 2.01 & 3.69.E-08    & 2.98 \\
	\bottomrule
	\bottomrule
	\end{tabular}}
	\caption{Maximum errors $E^x_{\max}$ and $E^t_{\max}$ with convergence rates of approximate solutions obtained by the fast predictor-corrector scheme for the advection diffusion equation \eqref{eq:AD_cor} and \eqref{eq:AD_pre} in Example \ref{ex4}. $h$ is determined by $\tau$ with $h=\tau^{2/3}$ and $\tau$ is determined by $h$ with $\tau=h^{3/2}$ when $E^x_{\max}$ and $E^t_{\max}$ are estimated, respectively. }\label{TABLE-ex4-1}
\end{table}

\begin{table}[htp]
	\centering
	\resizebox{1\textwidth}{!}{
	\begin{tabular}{l|llr|llrV{2}l|llr|llr}
		\toprule
		\toprule
		&\multicolumn{3}{c|}{C-PC-Q}&\multicolumn{3}{cV{2}}{F-PC-Q}&&\multicolumn{3}{c|}{C-PC-Q}&\multicolumn{3}{c}{F-PC-Q}     \\
		\cline{2-7}\cline{9-14}
		& $E_{\max}^t$ & roc  & time   & $E_{\max}^t$ & roc  & time   &        & $E_{\max}^x$ & roc    & time   & $E_{\max}^x$ & roc  & time \\
		\midrule
		$h$    &\multicolumn{3}{l}{$\alpha=0.2$}  &\multicolumn{3}{rV{2}}{$\tau=1/80000$}       & $\tau$ & \multicolumn{3}{l}{$\alpha=0.2$} &\multicolumn{3}{r}{$h=1/10000$}       \\
		\midrule
1/10  & 4.01.E-03 & - & 0.37  & 4.01.E-03 & - & 0.29  & 1/10  & 9.11.E-02 & - & 160.51 & 9.11.E-02 & - & 0.43 \\
1/20  & 4.48.E-04 & 3.16 & 0.63  & 4.48.E-04 & 3.16 & 0.51  & 1/20  & 2.21.E-02 & 2.04 & 160.01 & 2.21.E-02 & 2.04 & 0.47 \\
1/40  & 5.28.E-05 & 3.08 & 1.36  & 5.28.E-05 & 3.08 & 0.92  & 1/40  & 5.50.E-03 & 2.01 & 169.92 & 5.50.E-03 & 2.01 & 0.58 \\
1/80  & 6.38.E-06 & 3.05 & 3.63  & 6.38.E-06 & 3.05 & 1.85  & 1/80  & 1.38.E-03 & 1.99 & 211.10 & 1.38.E-03 & 1.99 & 0.80 \\
1/160 & 7.82.E-07 & 3.03 & 9.79  & 7.82.E-07 & 3.03 & 4.02  & 1/160 & 3.58.E-04 & 1.95 & 226.09 & 3.58.E-04 & 1.95 & 1.20 \\
1/320 & 1.03.E-07 & 2.93 & 25.37 & 1.03.E-07 & 2.93 & 8.56  & 1/320 & 1.02.E-04 & 1.81 & 266.34 & 1.02.E-04 & 1.81 & 1.56 \\
1/640 & 2.16.E-08 & 2.25 & 91.31 & 2.15.E-08 & 2.25 & 12.88 & 1/640 & 4.11.E-05 & 1.32 & 304.85 & 4.11.E-05 & 1.32 & 2.29 \\
		\midrule
		$h$    &\multicolumn{6}{lV{2}}{$\alpha=0.5$}        & $\tau$ & \multicolumn{6}{l}{$\alpha=0.5$}        \\
		\midrule
1/10  & 4.15.E-03 & - & 0.36   & 4.19.E-03 & - & 0.26  & 1/10  & 9.19.E-02 & - & 159.25 & 9.19.E-02 & - & 0.42 \\
1/20  & 4.62.E-04 & 3.17 & 0.60   & 4.68.E-04 & 3.16 & 0.47  & 1/20  & 2.23.E-02 & 2.04 & 161.46 & 2.23.E-02 & 2.04 & 0.46 \\
1/40  & 5.41.E-05 & 3.09 & 1.29   & 5.50.E-05 & 3.09 & 0.93  & 1/40  & 5.53.E-03 & 2.01 & 168.48 & 5.53.E-03 & 2.01 & 0.57 \\
1/80  & 6.54.E-06 & 3.05 & 3.67   & 6.65.E-06 & 3.05 & 1.73  & 1/80  & 1.38.E-03 & 2.00 & 175.62 & 1.38.E-03 & 2.00 & 0.78 \\
1/160 & 8.06.E-07 & 3.02 & 9.64   & 8.20.E-07 & 3.02 & 3.27  & 1/160 & 3.45.E-04 & 2.00 & 189.22 & 3.45.E-04 & 2.00 & 1.20 \\
1/320 & 9.95.E-08 & 3.02 & 43.32  & 1.01.E-07 & 3.02 & 5.99  & 1/320 & 8.66.E-05 & 1.99 & 231.99 & 8.66.E-05 & 1.99 & 1.82 \\
1/640 & 1.83.E-08 & 2.44 & 125.35 & 1.85.E-08 & 2.45 & 12.06 & 1/640 & 2.20.E-05 & 1.97 & 309.03 & 2.20.E-05 & 1.97 & 2.65 \\
		\midrule
		$h$    &\multicolumn{6}{lV{2}}{$\alpha=0.8$}        & $\tau$ & \multicolumn{6}{l}{$\alpha=0.8$}        \\
		\midrule
1/10  & 3.93.E-03 & - & 0.59   & 4.72.E-03 & - & 0.31  & 1/10  & 9.14.E-02 & - & 168.18 & 9.14.E-02 & - & 0.51 \\
1/20  & 4.76.E-04 & 3.05 & 0.96   & 5.33.E-04 & 3.15 & 0.62  & 1/20  & 2.22.E-02 & 2.04 & 173.55 & 2.22.E-02 & 2.04 & 0.54 \\
1/40  & 7.05.E-05 & 2.75 & 1.86   & 6.31.E-05 & 3.08 & 1.19  & 1/40  & 5.50.E-03 & 2.01 & 174.79 & 5.50.E-03 & 2.01 & 0.66 \\
1/80  & 9.58.E-06 & 2.88 & 4.90   & 8.14.E-06 & 2.95 & 2.33  & 1/80  & 1.37.E-03 & 2.00 & 187.53 & 1.37.E-03 & 2.00 & 0.93 \\
1/160 & 1.25.E-06 & 2.94 & 12.34  & 1.05.E-06 & 2.96 & 4.66  & 1/160 & 3.43.E-04 & 2.00 & 206.76 & 3.43.E-04 & 2.00 & 1.42 \\
1/320 & 1.58.E-07 & 2.98 & 30.09  & 1.33.E-07 & 2.97 & 9.22  & 1/320 & 8.56.E-05 & 2.00 & 241.23 & 8.56.E-05 & 2.00 & 2.13 \\
1/640 & 2.11.E-08 & 2.91 & 116.92 & 1.75.E-08 & 2.93 & 18.22 & 1/640 & 2.14.E-05 & 2.00 & 320.79 & 2.14.E-05 & 2.00 & 3.34\\
		\bottomrule
		\bottomrule
	\end{tabular}}
	\caption{Maximum errors $E^t_{\max}$ and CPU times executed C-PC-Q and F-PC-Q for various values $\alpha$ with $\tau=1/80000$ fixed. And maximum errors $E^x_{\max}$ and CPU times executed C-PC-Q and F-PC-Q for various values $\alpha$ with $h=1/10000$ fixed in Example \ref{ex4}}\label{TABLE-ex4-2}
\end{table}

\begin{table}[htp]
	\centering
	\resizebox{1\textwidth}{!}{
		\begin{tabular}{l|llll|llll|llll}
			\toprule
			\toprule
			& $E_{\max}^x$ & roc  & $E_{\max}^t$ & roc  & $E_{\max}^x$ & roc  & $E_{\max}^t$ & roc  & $E_{\max}^x$ & roc  & $E_{\max}^t$ & roc  \\
			\cline{2-13}
			&  \multicolumn{4}{c|}{$\alpha=0.2$}       & \multicolumn{4}{c|}{$\alpha=0.5$}       & \multicolumn{4}{c}{$\alpha=0.8$}       \\
			\midrule
			$\tau,h$     & \multicolumn{12}{c}{F-PC-Q}  \\
			\midrule
1/10  & 2.82.E-03 & - & 2.76.E-04 & - & 2.99.E-03 & - & 2.98.E-04 & - & 3.21.E-03 & 0.00 & 3.72.E-04 & 0.00 \\
1/20  & 7.06.E-04 & 2.00 & 3.58.E-05 & 2.95 & 7.56.E-04 & 1.98 & 3.89.E-05 & 2.94 & 8.75.E-04 & 1.87 & 5.44.E-05 & 2.77 \\
1/40  & 1.77.E-04 & 2.00 & 4.43.E-06 & 3.01 & 1.91.E-04 & 1.98 & 4.84.E-06 & 3.01 & 2.47.E-04 & 1.82 & 7.25.E-06 & 2.91 \\
1/80  & 4.43.E-05 & 2.00 & 5.53.E-07 & 3.00 & 4.81.E-05 & 1.99 & 6.05.E-07 & 3.00 & 6.69.E-05 & 1.89 & 9.42.E-07 & 2.94 \\
1/160 & 1.11.E-05 & 2.00 & 6.93.E-08 & 3.00 & 1.21.E-05 & 1.99 & 7.59.E-08 & 3.00 & 1.76.E-05 & 1.93 & 1.20.E-07 & 2.97 \\
1/320 & 2.77.E-06 & 2.00 & 8.69.E-09 & 3.00 & 3.03.E-06 & 2.00 & 9.50.E-09 & 3.00 & 4.58.E-06 & 1.94 & 1.52.E-08 & 2.98 \\
1/640 & 6.93.E-07 & 2.00 & 5.04.E-09 & 0.78 & 7.58.E-07 & 2.00 & 1.25.E-09 & 2.93 & 1.17.E-06 & 1.96 & 2.29.E-09 & 2.74\\
			\bottomrule
			\bottomrule
	\end{tabular}}
	\caption{Maximum errors $E^x_{\max}$ and $E^t_{\max}$ with convergence rates of approximate solutions obtained by the fast predictor-corrector scheme for the diffusion equation \eqref{eq:D_cor} and \eqref{eq:D_pre} in Example \ref{ex5}. $h$ is determined by $\tau$ with $h=\tau^{2/3}$ and $\tau$ is determined by $h$ with $\tau=h^{3/2}$ when $E^x_{\max}$ and $E^t_{\max}$ are estimated, respectively.}\label{TABLE-ex5-1}
\end{table}

\begin{table}[htp]
	\centering
	\resizebox{1\textwidth}{!}{
		\begin{tabular}{l|llr|llrV{2}l|llr|llr}
			\toprule
			\toprule
			&\multicolumn{3}{c|}{C-PC-Q}&\multicolumn{3}{cV{2}}{F-PC-Q}&&\multicolumn{3}{c|}{C-PC-Q}&\multicolumn{3}{c}{F-PC-Q}     \\
			\cline{2-7}\cline{9-14}
			& $E_{\max}^t$ & roc  & time   & $E_{\max}^t$ & roc  & time   &        & $E_{\max}^x$ & roc    & time   & $E_{\max}^x$ & roc  & time \\
			\midrule
			$h$    &\multicolumn{3}{l}{$\alpha=0.2$}  &\multicolumn{3}{rV{2}}{$\tau=1/40000$}       & $\tau$ & \multicolumn{3}{l}{$\alpha=0.2$} &\multicolumn{3}{r}{$h=1/4000$}       \\
			\midrule
1/10  & 6.46.E-06 & -  & 0.16  & 6.46.E-06 & -  & 0.16 & 1/10  & 2.84.E-03 & - & 25.00 & 2.84.E-03 & - & 0.19 \\
1/20  & 9.18.E-07 & 2.81  & 0.31  & 9.18.E-07 & 2.81  & 0.27 & 1/20  & 7.10.E-04 & 2.00 & 25.34 & 7.10.E-04 & 2.00 & 0.20 \\
1/40  & 1.22.E-07 & 2.91  & 0.71  & 1.22.E-07 & 2.91  & 0.54 & 1/40  & 1.77.E-04 & 2.00 & 25.52 & 1.77.E-04 & 2.00 & 0.24 \\
1/80  & 1.53.E-08 & 2.99  & 1.59  & 1.53.E-08 & 2.99  & 0.94 & 1/80  & 4.44.E-05 & 2.00 & 26.74 & 4.44.E-05 & 2.00 & 0.35 \\
1/160 & 2.83.E-09 & 2.43  & 4.55  & 2.83.E-09 & 2.43  & 1.71 & 1/160 & 1.11.E-05 & 2.00 & 32.16 & 1.11.E-05 & 2.00 & 0.55 \\
1/320 & 1.73.E-09 & 0.71  & 14.82 & 1.73.E-09 & 0.71  & 3.41 & 1/320 & 2.78.E-06 & 2.00 & 39.14 & 2.78.E-06 & 2.00 & 0.94 \\
1/640 & 5.08.E-09 & -1.55 & 53.79 & 5.08.E-09 & -1.55 & 6.87 & 1/640 & 6.98.E-07 & 1.99 & 50.72 & 6.98.E-07 & 1.99 & 1.67 \\
			\midrule
			$h$    &\multicolumn{6}{lV{2}}{$\alpha=0.5$}        & $\tau$ & \multicolumn{6}{l}{$\alpha=0.5$}        \\
			\midrule
1/10  & 5.16.E-06 & -  & 0.14  & 5.16.E-06 & -  & 0.16 & 1/10  & 3.12.E-03 & - & 25.63 & 3.12.E-03 & - & 0.19 \\
1/20  & 7.74.E-07 & 2.74  & 0.29  & 7.74.E-07 & 2.74  & 0.26 & 1/20  & 7.78.E-04 & 2.00 & 26.17 & 7.78.E-04 & 2.00 & 0.21 \\
1/40  & 1.05.E-07 & 2.88  & 0.63  & 1.05.E-07 & 2.88  & 0.46 & 1/40  & 1.95.E-04 & 2.00 & 26.20 & 1.95.E-04 & 2.00 & 0.25 \\
1/80  & 1.44.E-08 & 2.87  & 1.51  & 1.44.E-08 & 2.87  & 1.00 & 1/80  & 4.86.E-05 & 2.00 & 30.25 & 4.86.E-05 & 2.00 & 0.35 \\
1/160 & 1.33.E-09 & 3.44  & 4.65  & 1.33.E-09 & 3.44  & 1.78 & 1/160 & 1.22.E-05 & 2.00 & 30.86 & 1.22.E-05 & 2.00 & 0.52 \\
1/320 & 1.39.E-09 & -0.06 & 15.02 & 1.39.E-09 & -0.06 & 3.49 & 1/320 & 3.04.E-06 & 2.00 & 38.23 & 3.04.E-06 & 2.00 & 0.92 \\
1/640 & 1.50.E-09 & -0.11 & 53.26 & 1.50.E-09 & -0.11 & 7.39 & 1/640 & 7.60.E-07 & 2.00 & 51.08 & 7.60.E-07 & 2.00 & 1.68 \\
			\midrule
			$h$    &\multicolumn{6}{lV{2}}{$\alpha=0.8$}        & $\tau$ & \multicolumn{6}{l}{$\alpha=0.8$}        \\
			\midrule
1/10  & 2.84.E-05 & -  & 0.23  & 2.84.E-05 & -  & 0.18 & 1/10  & 5.06.E-03 & - & 25.76 & 5.06.E-03 & - & 0.24 \\
1/20  & 3.78.E-06 & 2.91  & 0.41  & 3.78.E-06 & 2.91  & 0.35 & 1/20  & 1.26.E-03 & 2.01 & 26.92 & 1.26.E-03 & 2.01 & 0.23 \\
1/40  & 4.85.E-07 & 2.96  & 0.82  & 4.85.E-07 & 2.96  & 0.64 & 1/40  & 3.14.E-04 & 2.00 & 26.16 & 3.14.E-04 & 2.00 & 0.26 \\
1/80  & 6.19.E-08 & 2.97  & 2.00  & 6.19.E-08 & 2.97  & 1.27 & 1/80  & 7.86.E-05 & 2.00 & 27.99 & 7.86.E-05 & 2.00 & 0.37 \\
1/160 & 7.96.E-09 & 2.96  & 5.04  & 7.96.E-09 & 2.96  & 2.49 & 1/160 & 1.96.E-05 & 2.00 & 30.38 & 1.96.E-05 & 2.00 & 0.62 \\
1/320 & 2.16.E-09 & 1.88  & 17.31 & 2.16.E-09 & 1.88  & 4.96 & 1/320 & 4.91.E-06 & 2.00 & 37.60 & 4.91.E-06 & 2.00 & 1.04 \\
1/640 & 3.05.E-09 & -0.50 & 58.40 & 3.05.E-09 & -0.50 & 9.75 & 1/640 & 1.23.E-06 & 2.00 & 51.48 & 1.23.E-06 & 2.00 & 1.95\\
			\bottomrule
			\bottomrule
	\end{tabular}}
	\caption{Maximum errors $E^t_{\max}$ and CPU times executed C-PC-Q and F-PC-Q for various values $\alpha$ with $\tau=1/40000$ fixed. And maximum errors $E^x_{\max}$ and CPU times executed C-PC-Q and F-PC-Q for various values $\alpha$ with $h=1/4000$ fixed in Example \ref{ex5}}\label{TABLE-ex5-2}
\end{table}

\begin{figure}[htp]
	\begin{center}
		\begin{tabular}{c}
			\resizebox{110mm}{!}{\includegraphics{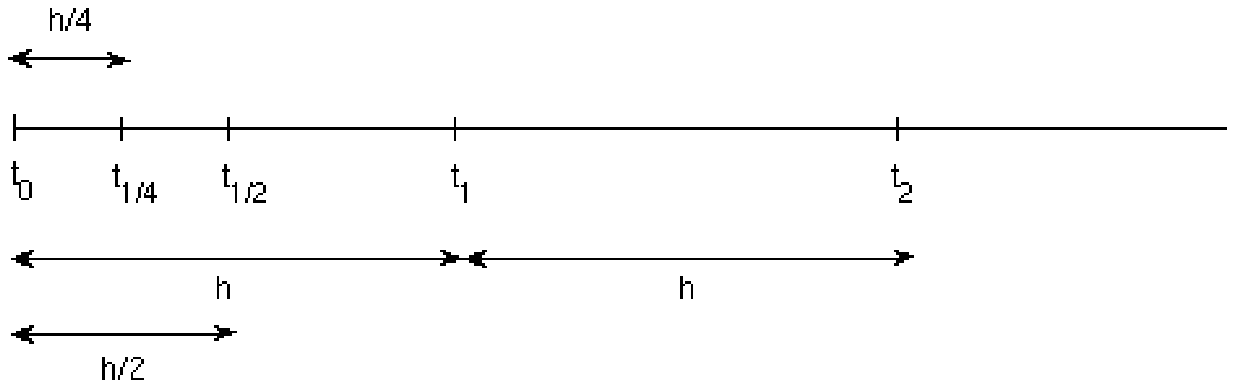}}
		\end{tabular}
		\caption{\small{Grid of the linear-quadratic start-up algorithm.}}
		\label{fig_startup_quad}
	\end{center}
\end{figure}

\section{Conclusion}\label{sec_con}
In this paper, new predictor-corrector methods using linear and quadratic interpolations for nonlinear Caputo-Fabrizio fractional differential equations were introduced to deal with the integral equation with non-singular kernel \eqref{model} which is equivalent to the model problem \eqref{intro_mod}. The fast predictor-corrector schemes using the recurrence relation of the memory term were also introduced. The edges what the proposed methods have on that
\begin{enumerate}
	\item the global orders of convergence are $\mathcal{O}(h^2)$ and $\mathcal{O}(h^3)$ for linear and quadratic interpolations, respectively for the fractional order $0<\alpha<1$.
	\item The fast predictor-corrector algorithm requires approximately $\mathcal{O}(N)$ arithmetic operations while the regular predictor-corrector schemes requires approximately $\mathcal{O}(N^2)$ due to the memory term. Thus the fast algorithm has low computational complexity so that it reduces the computational cost comparing with the regular predictor-corrector schemes.
\end{enumerate}
The advantages above were numerically justified by two nonlinear fractional ordinary differential equations and two time-fractional partial differential equations. The performance of the proposed methods with the efficiency of the fast algorithm was demonstrated in Example \ref{ex2} and \ref{ex3}. We showed that the proposed methods can be implemented to solve the diffusion equations \ref{ex4} and \ref{ex5} withouth loss of accuracy and global error estimates proved theoretically and without compromising the low computational cost of the fast algorithm.


\section*{Acknowledgments}
This work was supported by Basic Science Research Program through the National Research Foundation of Korea (NRF) funded by the Ministry of Education (NRF-2014R1A1A2A16051147). We would like to address our thanks to professor Changpin Li for the correction of the typos of the \emph{Alg II} scheme in \cite{LCY}.

\appendix
\section{Start-up of the Scheme}\label{appendix-start-up}
 To find a desired accuracy for ${y}_{1}$ and $y_2$, we find the approximate solutions at $t_{1/4}$ and $y_{1/2}$ using the linear-quadratic predictor-corrector scheme described below. The grids are shown in Figure \ref{fig_startup_quad}.
 
 
First, let us define the following forms to be used frequently.
 \begin{align*}
	&\hat{A}^{c,d,e}_{n,a,b}=\int_{t_a}^{t_b} \left(\frac{\tau-t_d}{t_c-t_d}\right)\left( \frac{\tau-t_e}{t_c-t_e}
	\right)  \exp[-\beta(t_n-\tau)] d\tau \\
	&\hat{B}^{c,d}_{n,a,b}=\int_{t_a}^{t_b}\left(\frac{\tau-t_d}{t_c-t_d} \right) \exp[-\beta(t_n-\tau)]d\tau.
\end{align*}
The detail algorithm is described as follows
\begin{alg}\label{quad_alg} (Start-up Procedure)
\begin{enumerate}
\item Approximate $\tilde{y}_{1/4}$:
\begin{itemize}
\item[-] Predict $\tilde{y}^P_{1/4}$: 
\begin{align}\nonumber
[1-\beta\hat{B}_{1/4,0,1/4}^{1/4,0}]\tilde{y}^P_{1/4}=\tilde{y}_0e^{-\beta t_{1/4}}+\frac{1-\alp}{M(\alp)}f(t_{1/4},\tilde{y}_0)+\beta[\hat{B}_{1/4,0,1/4}^{0,1/4}\tilde{y}_0]
\end{align}
\item[-] Correct $\tilde{y}_{1/4}$: 
\begin{align}\nonumber
[1-\beta\hat{B}_{1/4,0,1/4}^{1/4,0}]\tilde{y}_{1/4}=\tilde{y}_0e^{-\beta t_{1/4}}+\frac{1-\alp}{M(\alp)}\tilde{f}_{1/4}^P+\beta[\hat{B}_{1/4,0,1/4}^{0,1/4}\tilde{y}_0]
\end{align}
\end{itemize}
\item  Approximate $\tilde{y}_{1/2}$:
\begin{itemize}
	\item[-] Predict $\tilde{y}^P_{1/2}$:  
	\begin{align}\nonumber
	[1-\beta\hat{A}_{1/2,0,1/2}^{1/2,0,1/4}]\tilde{y}^P_{1/2}=\tilde{y}_0e^{-\beta t_{1/2}}+\frac{1-\alp}{M(\alp)}[2f_{1/4}-f_0]+\beta[\hat{A}_{1/2,0,1/2}^{0,1/4,1/2}\tilde{y}_{0}+\tilde{A}_{1/2,0,1/2}^{1/4,0,1/2}\tilde{y}_{1/4}]
	\end{align}
	\item[-] Correct $\tilde{y}_{1/2}$: 
	\begin{align}\nonumber
	[1-\beta\hat{A}_{1/2,0,1/2}^{1/2,0,1/4}]\tilde{y}_{1/2}=\tilde{y}_0e^{-\beta t_{1/2}}+\frac{1-\alp}{M(\alp)}\tilde{f}_{1/2}^P+\beta[\hat{A}_{1/2,0,1/2}^{0,1/4,1/2}\tilde{y}_{0}+\tilde{A}_{1/2,0,1/2}^{1/4,0,1/2}\tilde{y}_{1/4}]
	\end{align}
\end{itemize}
\item Approximate $\tilde{y}_1$:
\begin{itemize}
	\item[-] Predict $\tilde{y}^P_{1}$:  
	\begin{align}\nonumber
	[1-\beta\hat{A}_{1,0,1}^{0,1/2,1}]\tilde{y}^P_{1}=\tilde{y}_0e^{-\beta t_{1}}+\frac{1-\alp}{M(\alp)}[6f_{1/2}-8f_{1/4}-3f_0]+\beta[\hat{A}_{1,0,1}^{0,1/2,1}\tilde{y}_{0}+\hat{A}_{1,0,1}^{1/2,0,1}\tilde{y}_{1/2}]
	\end{align}
	\item[-] Correct $\tilde{y}_{1}$: 
	\begin{align}\nonumber
	[1-\beta\hat{A}_{1,0,1}^{0,1/2,1}]\tilde{y}_{1}=\tilde{y}_0e^{-\beta t_{1}}+\frac{1-\alp}{M(\alp)}\tilde{f}_{1}^P+\beta[\hat{A}_{1,0,1}^{0,1/2,1}\tilde{y}_{0}+\hat{A}_{1,0,1}^{1/2,0,1}\tilde{y}_{1/2}]
	\end{align}
\end{itemize}

\item Approximate $\tilde{y}_2$:
\begin{itemize}
	\item[-] Predict $\tilde{y}^P_{2}$:  
	\begin{align*}
	[1-\beta\hat{A}_{2,1,2}^{0,1,2}]\tilde{y}^P_{2}=&\tilde{y}_0e^{-\beta t_{2}}+\frac{1-\alp}{M(\alp)}[6f_{1}-8f_{1/2}-3f_0]\\		      
	+&\beta[\hat{A}_{2,0,1}^{0,1/2,1}\tilde{y}_{0}+\hat{A}_{2,0,1}^{1/2,0,1}\tilde{y}_{1/2}+\hat{A}_{2,0,1}^{1,0,1/2}\tilde{y}_1
	+\hat{A}_{2,1,2}^{0,1,2}\tilde{y}_0+\hat{A}_{2,1,2}^{1,0,2}\tilde{y}_1]
	\end{align*}
	\item[-] Correct $\tilde{y}_{2}$: 
	\begin{align*}
[1-\beta\hat{A}_{2,1,2}^{0,1,2}]\tilde{y}_{2}=&\tilde{y}_0e^{-\beta t_{2}}+\frac{1-\alp}{M(\alp)}f_{2}^P\\
           +&\beta[\hat{A}_{2,0,1}^{0,1/2,1}\tilde{y}_{0}+\hat{A}_{2,0,1}^{1/2,0,1}\tilde{y}_{1/2}+\hat{A}_{2,0,1}^{1,0,1/2}\tilde{y}_1+\hat{A}_{2,1,2}^{0,1,2}\tilde{y}_0+\hat{A}_{2,1,2}^{1,0,2}\tilde{y}_1]
	\end{align*}
\end{itemize}

\end{enumerate}
\end{alg}

\bibliography{bibtex_CF}

\end{document}